\theoremstyle{plain} \numberwithin{equation}{section}
\newtheorem{theorem}{Theorem}[section]
\newtheorem{corollary}[theorem]{Corollary}
\newtheorem{lemma}[theorem]{Lemma}
\newtheorem{proposition}[theorem]{Proposition}
\theoremstyle{definition}
\newtheorem{definition}[theorem]{Definition}
\newtheorem{remark}[theorem]{Remark}
\newtheorem{example}[theorem]{Example}
\newtheorem{algorithm}[theorem]{Algorithm}
\DeclareMathOperator{\support}{supp }
\DeclareMathOperator{\mi}{mi}
\DeclareMathOperator{\atoms}{atoms}
\DeclareMathOperator{\pd}{pd}
\title{Lattices and Hypergraphs associated to square-free monomial ideals}
\author[Lin]{Kuei-Nuan Lin}
\address{Department of Mathematics, The Penn State University, Greater Allegheny Campus,  McKeesport, PA}
\email{kul20@psu.edu}
\author[Mapes]{Sonja Mapes}
\address{Department of Mathematics, University of Notre Dame,  Notre Dame, IN}
\email{smapes1@nd.edu}
\keywords{lattices, hypergraphs, projective dimension, monomial ideals}
\subjclass[2010]{13D02, 05E40}
\date{\today}
\begin{document}
\maketitle

\begin{abstract}
Given a square-free monomial ideal $I$ in a polynomial ring $R$ over a
field $\mathbb{K}$,  one can associate it with its LCM-lattice and its
hypergraph. In this short note, we establish the connection between
the LCM-lattice and the hypergraph, and in doing so we provide a
sufficient condition for removing higher dimension edges of the
hypergraph without impacting the projective dimension of the
square-free monomial ideal. We also offer algorithms to compute the
projective dimension of a class of square-free monomial ideals built
using the new result and previous results of Lin-Mantero. 
\end{abstract}

\section{Introduction}

Finding the projective dimension or the Castelnuovo-Mumford regularity
of a homogeneous ideal, $I$, in a graded polynomial ring $R=\mathbb{K}[x_{1},...,x_{n}]$
over a field $\mathbb{K}$ has been an active research field over the last
decades.  See, for example, the survey papers \cite{Ha} and \cite{MS}. These two invariants
give important information about the ideal, and they measure the complexity
of the ideal. Moreover they play important roles in algebraic geometry, commutative
algebra, and combinatorial algebra. In general, one finds the graded
minimal free resolution of an ideal to obtain those invariants, but
the computation can be difficult and computationally
expensive. Alternatively one can try finding bounds for these invariants
using properties of the ideal.  Studying monomial
ideals and specifically square-free monomial ideals is important in
this strategy.  In particular, it is well known that the regularity of a given ideal is bounded by the regularity of
its initial ideal (see for example Theorem 22.9 \cite{Peeva}.) The polarization of a monomial ideal does not change
its projective dimension ( see for example Theorem 21.10 \cite{Peeva}), hence one may use square-free
monomial ideals to understand projective dimensions of monomial ideals
in general.  Finally when $I$ is a
square-free monomial ideal, there is a dual relation between the projective
dimension and the regularity with respect to the Alexander dual \cite{Terai}. Thus, finding
the projective dimension of a square-free monomial ideal is a
central problem in this field, see for instance \cite{DS}. Additionally, one can use the projective dimension to decide whether an ideal is Cohen-Macaulay.  

This paper focuses on using two combinatorial objects associated to a square-free monomial ideal in place of the minimal free
resolution:  the dual hypergraph and the LCM-lattice. Kimura, Terai
and Yoshida define the dual hypergraph of a square-free monomial ideal in order to compute
its arithmetical rank \cite{KTY}. Since
then, there are a couple of papers using this combinatorial object to
study various properties, for example, \cite{HaLin} and \cite{LMc}. In
particular, Lin and
Mantero use it to show that ideals with the same dual hypergraph
have the same  total Betti numbers and projective dimension \cite{LMa1}, which has found use in
other papers, such as in \cite{KMa}. 

For the second combinatorial object, Gasharov, Peeva,
and Welker define the LCM-lattice of a monomial ideal. They show that if
there is a map between two LCM-lattices which is a bijection on the
atoms and preserves joins, then a resolution of an ideal in the domain is the resolution
of the other with respect to the map, i.e. when the map is an isomorphism those two ideals have the
same total Betti numbers and projective dimension \cite{GPW}. Phan and Mapes
show that every finite atomic lattice is the LCM-lattice of a monomial
ideal via a special construction, \cite{mapes} and \cite{phan}. It is natural to inquire if there
is a connection between the dual hypergraph and the LCM-lattice of
a given square-free monomial ideal. The positive answer is one of
the first results in this paper (\Cref{ThmLCMHG}).  Specifically, one can construct the
LCM-lattice of a monomial ideal via its dual hypergraph and vice versa
as shown in \Cref{LCM-H}.

The results in \cite{LMa1} and \cite{LMa2} focus mostly on
determining the projective dimension when the dual hypergraph of an
ideal consists only of vertices and edges with cardinality 2 (i.e. is
``1-dimensional'').  Moreover the work of Kimura, Rinaldo, and Terai shows that the projective dimension
of a monomial ideal depends on the 1-skeleton structure of the dual
hypergraph \cite{KRT}. It is clear that sometimes a higher dimensional
edge of a dual hypergraph can be removed without impacting the projective
dimension of a monomial ideal. This paper focuses mostly on the
question:  Under what conditions can one
remove higher dimensional edges without changing the projective dimension
of a hypergraph?  The work by Lin and Mantero answers part of this question \cite{LMa2} with some restrictions on
the 1-skeleton of the hypergraph. 
In this paper, using the connection to LCM-lattices, we show a sufficient condition for when removing the
higher dimensional edge has no impact on total Betti numbers and hence the projective dimension (\Cref{sameTotalBetti}). We explain why the work of Kimura, Rinaldo, and Terai shows that the projective dimension of a monomial ideal depends on the 1-skeleton structure of the dual hypergraph \cite{KRT} and explain the result in Lin and Mantero in a combinatorial construction (\Cref{explain}). We then proceed with our
results concerning higher dimensional edges on bushes in 
\Cref{BushesSec} . In the end, we provide algorithms for computing the projective dimension of certain square-free monomial ideals using hypergraphs without the computation of minimal free resolution of the square-free monomials.
Through out this paper, ideals are square-free monomial ideals in a polynomial ring $R$ over the field $\mathbb{K}$.

\section{Lattices and LCM-lattices}
A \emph{lattice} is a set $(P, <)$  with an order relation $<$, which is transitive and antisymmetric satisfying the following properties:
\begin{enumerate}
\item $P$ has a maximum element denoted by $\hat{1}$
\item $P$ has a minimum element denoted by  $\hat{0}$
\item Every pair of elements $a$ and $b$ in $P$ has a join $a \vee b$, which is the least upper bound of the two elements
\item Every pair of elements $a$ and $b$ in $P$ has a meet $a \wedge b$, which is the greatest lower bound of the two elements.  
\end{enumerate}

We define an \emph{atom} of a lattice $P$ to be an element $x \in P$
such that $x$ covers $\hat{0}$ (i.e. $x > \hat{0}$ and there is no
element $a$ such that $x > a > \hat{0}$). We will denote the set of
atoms as $\atoms (P)$.  

\begin{definition}
If $P$ is a lattice and every element in $P -\{\hat{0}\}$ is the join of atoms, then $P$ is an \emph{atomic lattice}.  Further, if $P$ is finite, then it is a \emph{finite atomic lattice}. 
\end{definition}

Given a lattice $P$,  elements $x \in P$ are \emph{meet-irreducible} if $x \neq a \wedge b$ for any $ a > x, b>x$. 
The set of meet-irreducible elements in $P$ is denoted by  $\mathrm{mi}(P)$. 
Given an element $x \in P$, the \emph{filter} of $x$ is $\lceil {x} \rceil = \{a \in P
| x \leqslant a\}$.  

\begin{remark}\label{meetIrredMakeLattice}
Lemma 2.3 in \cite{mapes} guarantees that if $P$ is a finite atomic
lattice, then every element $p$ in $P-\{\hat{1}\}$ is the meet of all the
meet irreducible elements greater than $p$.  
\end{remark}

For the purposes of this paper it will often be convenient to consider finite atomic lattices as sets of sets in the following way.  Let $\mathcal{S}$ be a set of subsets of $\{1,...,n\}$ with no duplicates, closed under intersections, and containing the entire set, the empty set, and the sets $\{i\}$ for all $1 \leqslant i \leqslant n$.  Then it is easy to see $\mathcal{S}$ is a finite atomic lattice by ordering the sets in $\mathcal{S}$ by inclusion.  This set obviously has a minimal element, a maximal element, and $n$ atoms, so by {\cite[Proposition 3.3.1]{stanley}}, we need to show that it is a meet-semilattice.  Here the meet of two elements would be defined to be their intersection. Since $S$ is closed under intersections, this is a meet-semilattice.  Conversely, it is clear that all finite atomic lattices can be expressed in this way, simply by letting $$\mathcal{S}_P = \{ \sigma \,\vert\, \sigma = \support(p), p\in P\},$$ where $\support (p) = \{a_i \,\vert\, a_i \leqslant p, a_i \in \atoms(P)\}$.    

The only poset that we are interested in this paper is the LCM-lattice of a monomial ideal.  As a poset, the
LCM-lattice of $I$, typically denoted as $L_I$, is the set of all
least common multiples of subsets of generators of $I$ partially
ordered by divisibility.  It is easy to show that the LCM-lattice is
in fact a finite atomic lattice.  

\subsection{Coordinatizations of LCM-lattices}\label{coord}

LCM-lattices became important in the study of resolutions of monomial
ideals in the paper by Gasherov, Peeva, and Welker \cite{GPW}.  Two primary results, Theorem 3.3 and Theorem 2.1 in \cite{GPW}, respectively, will be important to us here:
 If one has monomial ideals $I$ and $I'$ in polynomial rings $R$
  and $R'$ with LCM-lattices $L$ and $L'$, respectively.  If
  there is a join preserving map $f: L \rightarrow L'$ which is a
  bijection on atoms then a minimal resolution of $R/I$ can be relabeled
  to be a resolution of $R'/I'$.  And if $f$ is an isomorphism then the
  relabeled resolution is a minimal resolution of $R'/I'$.

Continuing this study, one of the main results (Theorem 5.1) of \cite{phan} is to show that
every finite atomic lattice is in fact the LCM-lattice of a monomial
ideal.  This result was generalized by a modified construction in
\cite{mapes}, which also showed that with the modified
construction  all monomial ideals can be realized this way.  We
include a brief description of this work here for
the convenience of the reader.

Define a {\it labeling} of a finite atomic lattice $P$ as any assignment of non-trivial monomials $\mathcal {M} = \{m_{p_1}, ..., m_{p_t}\}$ to some elements $p_i \in P$.  It will be convenient to think of unlabeled elements as having the label $1$. Define the monomial ideal $M_{\mathcal{M}}$ to be the ideal generated by monomials
\begin{equation} \label{IdealDef}
x(a) = \prod_{ p \in \lceil{a}\rceil ^ c} m_p 
\end{equation} for each $a \in \atoms (P)$ where $\lceil a \rceil^c$ means take the complement of $\lceil a \rceil$ in $P$.  We say that the labeling $\mathcal{M}$ is a {\it coordinatization} if the LCM-lattice of $M_{\mathcal{M}}$ is isomorphic to   $P$. 

The following theorem, which is Theorem 3.2 in \cite{mapes}, gives a
criteria for when a labeling is a coordinatization.  

\begin{theorem}\label{coordinatizations}
Any labeling $\mathcal{M}$ of elements in a finite atomic lattice $P$ by monomials satisfying the following two conditions will yield a coordinatization of $P$.

\begin{enumerate}
\item[(C1)] If $p \in \mi (P)$ then $m_p \not = 1$.  (i.e. all meet-irreducibles are labeled)
\item[(C2)] If $\gcd (m_p, m_q) \not = 1$ for some $p, q \in P$ then $p$ and $q$ must be comparable.  (i.e. each variable only appears in monomials along one chain in $P$.)
\end{enumerate}
\end{theorem}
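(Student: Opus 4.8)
The plan is to exhibit an explicit order isomorphism between $P$ and the LCM-lattice $L := L_{M_{\mathcal M}}$, using the ``sets of sets'' description of finite atomic lattices recalled above. Write $\atoms(P) = \{a_1,\dots,a_n\}$ and recall that the generators of $M_{\mathcal M}$ are the $x(a)$ for $a \in \atoms(P)$. The whole argument rests on a single divisibility lemma, which I will call the Key Claim: for every atom $b$ and every subset $A \subseteq \atoms(P)$,
\[
x(b) \mid \lcm_{a \in A} x(a) \quad\Longleftrightarrow\quad b \leq \bigvee_{a \in A} a.
\]
Granting this, take $A$ a singleton to see that $x(b)\mid x(a)$ iff $b=a$, so the $x(a)$ are distinct and form the minimal generating set, and $a \mapsto x(a)$ is a bijection from $\atoms(P)$ to $\atoms(L)$. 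Identifying $\support_L(\ell)$ with $\{a : x(a)\mid \ell\}$, the Key Claim says that for $\ell = \lcm_{a\in A} x(a)$ we have $\support_L(\ell) = \support_P(\bigvee_{a\in A} a)$. Hence $\{\support_L(\ell) : \ell \in L\} = \{\support_P(q) : q\in P\} = \mathcal{S}_P$, and since a finite atomic lattice is determined by its family of atom-supports, this yields $L \cong P$.

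To prove the Key Claim I would argue one variable at a time, which is exactly what (C2) makes possible. Fix a variable $x_i$ and let $C_i = \{p \in P : x_i \mid m_p\}$; by (C2) the set $C_i$ is a chain. For $c \in P$ put $U_c = \{p \in C_i : c \not\leq p\}$; since $c \not\leq p$ and $p'\leq p$ force $c\not\leq p'$, each $U_c$ is a down-set of the chain $C_i$, and writing $e_{i,p}$ for the exponent of $x_i$ in $m_p$ one has $\deg_{x_i} x(c) = \sum_{p \in U_c} e_{i,p}$. The decisive structural fact is that down-sets of a chain are totally ordered by inclusion, so any family of the $U_c$ is nested.

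For the direction ($\Leftarrow$), set $q = \bigvee_{a\in A} a$. Since $q \leq p$ iff $a \leq p$ for all $a\in A$, we get $U_q = \bigcup_{a\in A} U_a$, and by nestedness $U_q = U_{a^\ast}$ for some $a^\ast \in A$. From $b \leq q$ we get $U_b \subseteq U_q = U_{a^\ast}$, whence $\deg_{x_i} x(b) \leq \deg_{x_i} x(a^\ast) \leq \deg_{x_i}\!\big(\lcm_{a\in A} x(a)\big)$ for every $i$; as $i$ was arbitrary, $x(b)$ divides the lcm. For ($\Rightarrow$) I argue the contrapositive: assume $b \not\leq q$. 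Here is where (C1) enters. Since $b\not\leq q$ forces $q \neq \hat 1$, Remark \ref{meetIrredMakeLattice} writes $q$ as the meet of the meet-irreducibles above it, so some meet-irreducible $p$ satisfies $q \leq p$ but $b \not\leq p$. By (C1) we have $m_p \neq 1$, so some variable $x_i$ divides $m_p$, i.e. $p \in C_i$; then $p \in U_b$ while $p \notin U_a$ for every $a \in A$ (because $a \leq q \leq p$). Nestedness now forces $U_a \subsetneq U_b$ for each $a$, so $\deg_{x_i} x(b) \geq \deg_{x_i} x(a) + e_{i,p} > \deg_{x_i} x(a)$ for all $a\in A$, and hence $\deg_{x_i} x(b) > \deg_{x_i}(\lcm_{a\in A} x(a))$. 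Thus $x(b)$ does not divide the lcm, proving ($\Rightarrow$).

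I expect the main obstacle to be the ($\Rightarrow$) direction, since it requires producing an \emph{explicit} variable witnessing non-divisibility rather than merely tracking orders: the meet-irreducible representation of Remark \ref{meetIrredMakeLattice} together with (C1) is what guarantees such a variable exists, and the chain structure from (C2) is what prevents the extra $m_p$-factor in $x(b)$ from being silently ``filled in'' by other generators along a different chain. The remaining bookkeeping — checking that the $x(a)$ are nontrivial (again via a meet-irreducible not above $a$, using (C1)) and that inclusion of support sets matches the divisibility order on both sides — is routine and would be dispatched after the Key Claim is in hand.
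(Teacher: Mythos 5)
Your proof is correct, and since this paper only imports the statement (it is Theorem 3.2 of \cite{mapes}, quoted here without proof), the right comparison is with the argument there: your Key Claim is precisely the engine of that proof, with (C2) yielding the per-variable analysis of nested down-sets $U_c$ along the chain $C_i$, and (C1) together with Remark \ref{meetIrredMakeLattice} supplying the meet-irreducible witness that forces the strict degree inequality in the non-divisibility direction. The only difference is packaging: \cite{mapes} constructs the isomorphism directly via $p \mapsto \prod_{q \in \lceil p \rceil^c} m_q$ and verifies it is a lattice isomorphism, whereas you characterize divisibility of $x(b)$ into lcms of generators and then pass through the sets-of-sets description; the one step you leave tacit --- that the support map is injective on the LCM-lattice because every element of $L_{M_{\mathcal{M}}}$ equals the lcm of the generators dividing it --- is indeed routine.
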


\begin{example}
In Figure \ref{FirstCoord} we see an example of a poset $P$ with a
labeling on the vertices.  We can see that this labeling satisfies
both conditions of \Cref{coordinatizations} and so one can check that the corresponding
monomial ideal $(bcd, abc, a^2c, a^2b)$ has $P$ as its LCM-lattice.
Note that this ideal is not square-free, to get a square-free monomial
ideal with this LCM-lattice, one would just need to replace one of the
$a$ labels with a new variable or square-free monomial which does not
use any of the variables $a,\dots, d$.  

\begin{figure}[h] 
\caption{}\label{FirstCoord}
\begin{center}
 \begin{tikzpicture}[scale=1, vertices/.style={draw, circle, inner sep=2pt}]
             \node [vertices] (0) at (-0+0,0){};
             \node [vertices] (1) at (-2.25+0,1.33333){$a$};
             \node [vertices] (2) at (-2.25+1.5,1.33333){};
             \node [vertices] (3) at (-2.25+3,1.33333){$b$};
             \node [vertices] (4) at (-2.25+4.5,1.33333){$c$};
             \node [vertices] (5) at (-.75+0,2.66667){$a$};
             \node [vertices] (6) at (-.75+1.5,2.66667){$d$};
             \node [vertices] (7) at (-0+0,4){};
     \foreach \to/\from in {0/1, 0/2, 0/3, 0/4, 1/5, 2/5, 2/6, 3/6, 4/6, 5/7, 6/7}
     \draw [-] (\to)--(\from);
     \end{tikzpicture}
\end{center}
\end{figure}

\end{example}

\section{Hypergraph of a square-free monomial ideal}

Kimura, Terai, and Yoshida associate a square-free monomial ideal with
a hypergraph in \cite{KTY}, see Definition \ref{HGDef}.  Note that
this construction is different from the construction, associating ideals
with hypergraphs, which is extended from the study of edge ideals.  In particular
relative to edge ideals, the hypergraph of Kimura, Terai, and Yoshida
might be more aptly named the ``dual hypergraph''.   The construction
of dual hypergraphs is first introduced by Berge in \cite{Berge}. In
the edge ideal case, one associates a square-free monomial with a
hypergraph by setting variables as vertices and each monomial
corresponds to an edge of the hypergraph (see for example
\cite{Ha}). In the following definition, we actually associate
variables with edges of the hypergraph and vertices with the monomial generators of the ideal, and in practice this is the dual hypergraph of the
hypergraph in the edge ideal construction.

\begin{definition}\label{HGDef}
Let $I$ be a square-free monomial ideal in a polynomial ring with $n$ variables with minimal monomial
generating set $\{m_1, \dots, m_\mu\}$.  Let $V$ be the set
$\{1,\dots, \mu\}$.  We define $\mathcal{H}(I)$ (or $\mathcal{H}$ when
$I$ is understood) to
be the hypergraph associated to $I$ which is defined as $\{\{j \in V :
x_i|m_j\} : i = 1,2,\dots, n\}$.   Moreover $\mathcal{H}$ is
\emph{separated} if in addition for every $1 \leq j_1 < j_2 \leq
\mu$, there exist edges $F_1$ and $F_2$ in $\mathcal{H}$, so that $j_1
\in F_1 \cap (V-F_2)$ and  $j_2
\in F_2 \cap (V-F_1)$
\end{definition}

Note that when a hypergraph is separated then its vertices correspond to a
minimal generating set of the monomial ideal. 

\begin{example} 
Let \[I=(abo,bcp,cdepq,efqr,fgr,ghr,hijoq,jk,klp,lmo,mn),\] the \Cref{FirstHG} is the hypergraph associated to $I$ via the \Cref{HGDef}

\begin{figure}[h] 
\caption{}\label{FirstHG}
\begin{center}
\begin{tikzpicture}
\shade [shading=ball, ball color=black]  (0,0) circle (.1) node [left] {$n$};
\draw  [shape=circle] (1,-0.5) circle (.1) ;
\draw  [shape=circle] (2,-1) circle (.1) ;
\draw  [shape=circle] (3,-1.5) circle (.1) ;
\shade [shading=ball, ball color=black] (4,-1) circle (.1) node [below] {$i$};
\draw  [shape=circle] (5,-0.5) circle (.1) ;
\draw  [shape=circle] (6,0) circle (.1) ;
\draw  [shape=circle] (5,0.5) circle (.1) ;
\shade [shading=ball, ball color=black] (4,1) circle (.1)  node [above] {$d$};
\draw  [shape=circle] (3,1.5) circle (.1) ;
\shade [shading=ball, ball color=black]  (2,1) circle (.1)  node [left] {$a$};

\draw [line width=1.2pt] (0,0)--(1,-0.5) node [pos=.5, below] {$m$};
\draw [line width=1.2pt] (1,-0.5)--(2,-1) node [pos=.5, below] {$l$};
\draw [line width=1.2pt] (2,-1)--(3,-1.5) node [pos=.5, below] {$k$};
\draw [line width=1.2pt] (3,-1.5)--(4,-1) node [pos=.5, below] {$j$};
\draw [line width=1.2pt ] (4,-1)--(4,1) node [pos=.5, right] {$q$};
\draw [line width=1.2pt ] (4,-1)--(5,-0.5) node [pos=.5, below] {$h$};
\draw [line width=1.2pt] (5,-0.5)--(6,0) node [pos=.5, below] {$g$};
\draw [line width=1.2pt] (6,0)--(5,0.5) node [pos=.5, above] {$f$};
\draw [line width=1.2pt] (5,0.5)--(4,1) node [pos=.5, above] {$e$};
\draw [line width=1.2pt] (4,1)--(3,1.5) node [pos=.5, above] {$c$};
\draw [line width=1.2pt] (3,1.5)--(2,1) node [pos=.5, above] {$b$};
\path [pattern=north east lines, pattern  color=blue]   (1,-0.5)--(4,-1)--(5,0.5)--(2,1)--cycle;
\path [pattern=north west lines, pattern  color=green]   (6,0)--(5,-0.5)--(5,0.5)--cycle;
\path [pattern=north west lines, pattern  color=purple]   (2,-1)--(3,1.5)--(4,1)--cycle;

\path (2,0.5)--(2,0.5) node [pos=.5, below ] {$o$ };
\path (3.2,1.35)--(3.5,1.35) node [pos=.5, below ] {$p$ };
\path (5,0)--(6,0) node [pos=.5 ] {$r$ };

\end{tikzpicture}
\end{center}
\end{figure}
\end{example}

Some important terminology regarding these hypergraphs is the
following.  We say a vertex $i \in V$ of $\mathcal{H}$ is an
\emph{open vertex} if $\{i\}$ is not in $\mathcal{H}$, and otherwise
$i$ is \emph{closed}.  In Figure \ref{FirstHG}, we can see that the
vertices labeled by $a, d, i,$ and $n$ are all closed, and the rest
are open.  Let  $\mathcal{H}^{i} = \{ F \in H : |F| \leq i +1\}$ denote the $i$-th dimensional subhypergraph of $\mathcal{H}$ where $|F|$ is the cardinality of the $F$. We call $\mathcal{H}^{1}$, the 1-skeleton of $\mathcal{H}$. We write  $\mathcal{H}^{1}_{O}$ as the subgraph of $\mathcal{H}^{1}$ such that it only has open vertices of $\mathcal{H}^{1}$.

Let $R = \mathbb{K}[x_1, \dots, x_n]$ be a polynomial ring over a field $\mathbb{K}$.  The minimal free resolution of $R/I$ for an ideal $I\subset R$ is an exact sequence of the form 
	\[
	0  \rightarrow  \bigoplus_j S(-j)^{\beta_{p,j}(R/I)}  \rightarrow \dots  \rightarrow  \bigoplus_j S^{\beta_{1,j}(R/I)}  \rightarrow  R \rightarrow  R/I  \rightarrow  0
		\]
The exponents $\beta_{i,j}(R/I)$ are invariants of $R/I$, called the Betti numbers of $R/I$. In general, finding Betti numbers is still a wide open question. This project focuses on studying the projective dimension of $R/I$, denoted $\pd(R/I)$, which is defined as follows
	\begin{align*}
		\pd(R/I) &= \max\{ i \mid \beta_{i,j}(R/I) \neq 0\}.
	\end{align*}

Recently there has been a number of results concerning determining the
projective dimension of square-free monomial ideals
from the associated hypergraph.   The proposition below
allows us to talk about the projective dimension of a
hypergraph rather than an ideal.

\begin{proposition}(Proposition 2.2 \cite{LMa1})\label{sameBetti} If $I_1$
  and $I_2$ are square-free monomial ideals associated to the
same separated hypergraph $\mathcal{H}$, then the total Betti numbers of two ideals coincide.
 \end{proposition}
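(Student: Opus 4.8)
The plan is to reduce the statement to the Gasharov--Peeva--Welker theorem recalled in \Cref{coord}: if the LCM-lattices $L_{I_1}$ and $L_{I_2}$ are isomorphic, then a minimal resolution of one relabels to a minimal resolution of the other, and in the isomorphism case the relabeled resolution is again minimal, so the total Betti numbers agree. Thus the entire content is to show that a separated hypergraph $\mathcal{H}$ determines, up to isomorphism, the LCM-lattice of \emph{any} square-free monomial ideal realizing it.

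First I would fix notation. For a square-free ideal $I$ realizing $\mathcal{H}$, with minimal generators $m_1, \ldots, m_\mu$ indexed by $V = \{1, \ldots, \mu\}$, each variable $x$ determines the edge $\{j \in V : x \mid m_j\}$, and $\mathcal{H}$ is by definition the set of edges so obtained. Because $I$ is square-free, for any $T \subseteq V$ a variable $x$ divides $\mathrm{lcm}(m_j : j \in T)$ precisely when its edge meets $T$; moreover divisibility among such lcm's coincides with inclusion of their variable-supports, and lcm corresponds to union of supports. I would then introduce the purely combinatorial map $\Psi \colon 2^V \to 2^{\mathcal{H}}$, $\Psi(T) = \{F \in \mathcal{H} : F \cap T \neq \emptyset\}$, recording which edges $T$ meets. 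Two vertex subsets give comparable (resp. equal) lcm's exactly when their $\Psi$-images are comparable (resp. equal) under inclusion; the only subtlety is that distinct variables can determine the same edge, but such variables always enter a variable-support together, so collapsing them to a single edge of $\mathcal{H}$ does not affect the comparison.

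Consequently the assignment $\mathrm{lcm}(m_j : j \in T) \mapsto \Psi(T)$ is a well-defined poset isomorphism from $L_I$ onto $\mathcal{L}(\mathcal{H}) := \{\Psi(T) : T \subseteq V\}$, ordered by inclusion, which is a finite atomic lattice built from $\mathcal{H}$ alone in the sets-of-sets language of \Cref{coord}. Here I would invoke the separated hypothesis: it guarantees that no generator divides another, so the $m_j$ genuinely form the minimal generating set and the atoms of $L_I$ are exactly the $\mu$ pairwise-distinct elements $\Psi(\{j\})$, one per vertex. Applying this description to both $I_1$ and $I_2$ yields $L_{I_1} \cong \mathcal{L}(\mathcal{H}) \cong L_{I_2}$, and since vertex $j$ corresponds to the atom $\Psi(\{j\})$ under both identifications, the composite isomorphism is a bijection on atoms sending the $j$-th generator of $I_1$ to the $j$-th generator of $I_2$. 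The isomorphism case of the Gasharov--Peeva--Welker theorem then gives equality of total Betti numbers.

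I expect the main obstacle to be the bookkeeping around duplicate edges and the role of the separated condition: one must check that $\Psi$ really descends to an isomorphism (injectivity on lcm-classes and surjectivity onto $\mathcal{L}(\mathcal{H})$) and, crucially, that separatedness is precisely what forces the vertices to index a minimal generating set so that the atoms of $L_{I_1}$ and $L_{I_2}$ match up bijectively. Everything else is routine, namely join-preservation $\Psi(T \cup T') = \Psi(T) \cup \Psi(T')$ and the identification of lcm with support-union in the square-free setting.
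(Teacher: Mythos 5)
Your argument is correct, but it is worth noting that the paper does not prove this proposition at all: it is imported verbatim from Proposition 2.2 of \cite{LMa1}, and what you have written is in effect a self-contained proof of that imported result. Your high-level strategy --- show that a separated hypergraph determines the LCM-lattice up to an atom-preserving isomorphism, then invoke the Gasharov--Peeva--Welker theorem --- is exactly the strategy the paper itself uses nearby, in \Cref{ThmLCMHG} and \Cref{sameTotalBetti}, but your implementation differs. You build the lattice directly out of edge-incidence data, via $\Psi(T) = \{F \in \mathcal{H} : F \cap T \neq \emptyset\}$, and verify by hand that $\mathrm{lcm} \mapsto \Psi(T)$ is well defined, join-preserving, and an order isomorphism; the duplicate-variable point you flag (several variables cutting out the same edge always divide an lcm together) and the observation that separatedness is exactly pairwise incomparability of the $\Psi(\{j\})$, hence minimality of the generating set, are precisely the two places where care is needed, and you handle both. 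The paper instead realizes the lattice on vertex subsets, as the meet-closure $L_{\mathcal{H}}$ of the complements $\{F^c : F \in \mathcal{H}\}$, and proves $L_{\mathcal{H}(I)} = L_I$ not by exhibiting an isomorphism but by labeling each $F^c$ with its variable $x_F$ and checking the coordinatization criterion of Theorem 3.2 of \cite{mapes} (conditions (C1) and (C2), with (C1) resting on the identification of meet-irreducibles in \Cref{removeMeets}). The two lattice models are equivalent --- your $\Psi(T)$ and the paper's element $\bigcap_{F \notin \Psi(T)} F^c$ determine one another --- but the trade-off is real: your route is more elementary and does not depend on the coordinatization machinery of \cite{mapes}, while the paper's route yields as a byproduct the description of which edges correspond to meet-irreducibles, which is what powers the later edge-removal results (\Cref{sameTotalBetti} and \Cref{explain}) and so could not be replaced by your argument without losing that payoff. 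One small point of bookkeeping you could make explicit: an isomorphism of finite atomic lattices automatically carries atoms to atoms, so once you know the atoms of $L_{I_1}$ and $L_{I_2}$ are both indexed by $V$ via $j \mapsto \Psi(\{j\})$, the atom-bijection hypothesis of \cite{GPW} is immediate, exactly as you assert.
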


 From now on, we will use $\pd(\mathcal{H}(I)) $ in
the place of $ \pd(R/I)$ throughout the paper.  If a hypergraph
$\mathcal{H}$ is an union of two disconnected hypergraphs $G_1$ and
$G_2$, we have $\pd(\mathcal{H})=\pd(G_1)+\pd(G_2)$ by Proposition
2.2.8 of \cite{Jacques}.

\section{Connection between dual hypergraphs and LCM-lattices}\label{LCM-H}

In this section, we want to show that one can re-build the LCM-lattice
$L_I$ of the monomial ideal $I$ from $\mathcal{H}(I)$.  Moreover, in
order to do so we will need to prove an important result that will allow us to
detect meet-irreducilbe elements of the LCM-lattice from the
hypergraph itself.  

First let us define a finite atomic lattice given a hypergraph
$\mathcal{H}$.  Thinking of a finite atomic lattice as a set of sets,
define $L_{\mathcal{H}}$ to be the meet-closure (or intersection-closure) of the set $\{F^c \, \vert \, F \in \mathcal{H}   \}$, where
$F^c$ means take the complement of each edge of $\mathcal{H}$ in the
set of vertices of $\mathcal{H}$.  This meet-closure will be a
meet-semilattice (partially ordered by inclusion), and so to make it a
lattice we add the set of all the vertices of $\mathcal{H}$ (i.e. a
maximal element).  

Our claim is that $L_{\mathcal{H}(I)} = L_I$, and we will prove this
by constructing a coordinatization of $L_{\mathcal{H}(I)}$ that will
produce the same monomial ideal $I$.  First though, we need to identify
which elements of $L_{\mathcal{H}(I)}$ are meet irreducible.      

Recall that a meet-irreducible of a finite atomic lattice $L$ is an element which is not the
meet of any 2 elements.  Thinking of $L$ as being a
set of subsets this means that there is an subset $\sigma$ in $L$
which is not the intersection of 2 (or more) subsets $\tau_1, \dots, \tau_t$ of $L$ where
none of these $\tau_i$ are $\sigma$.  If $L$ is the $L_{\mathcal{H}(I)}
  $ then taking complements this should
correspond to the following statement about edges in
$\mathcal{H}(I)$. 

\begin{proposition}\label{removeMeets}
If $F\in \mathcal{H}(I)$ is  the union
of 2 or more distinct edges of $\mathcal{H}(I)$ then the edge $F$
corresponds to an element which is a meet in $L_{\mathcal{H}(I)}$.
\end{proposition}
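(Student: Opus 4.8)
The plan is to translate the hypothesis about unions of edges into a statement about intersections of their complements via De Morgan's law, and then to verify directly that the resulting element of $L_{\mathcal{H}(I)}$ fails the paper's criterion for meet-irreducibility. Write the hypothesis as $F = F_1 \cup \cdots \cup F_t$ with $t \geq 2$ distinct edges $F_i \in \mathcal{H}(I)$, each properly contained in $F$ (if some $F_i$ equalled $F$, the union would not genuinely require two or more edges). Taking complements in the vertex set $V$ gives $F^c = F_1^c \cap \cdots \cap F_t^c$, which is exactly the meet of the $F_i^c$ in $L_{\mathcal{H}(I)}$, since that lattice is closed under intersection and its meet operation is intersection.

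First I would record the membership and strictness facts that make this a genuine meet decomposition. Each $F_i$ is an edge of $\mathcal{H}(I)$, so each $F_i^c$ belongs to the generating set $\{G^c : G \in \mathcal{H}(I)\}$ of the meet-closure and hence lies in $L_{\mathcal{H}(I)}$; the element $F^c$ itself lies in $L_{\mathcal{H}(I)}$ for the same reason. Because $F_i \subsetneq F$ we have $F_i^c \supsetneq F^c$, so each $F_i^c$ sits strictly above $F^c$ in the inclusion order. Thus $F^c$ is the intersection of at least two elements of $L_{\mathcal{H}(I)}$, each strictly larger than $F^c$. Note also that $F \neq \emptyset$ (two distinct edges cannot both be empty), so $F^c \neq V = \hat{1}$ and $F^c$ genuinely has elements above it.

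To conclude that $F^c$ is a meet in the sense of the paper's definition, I would compute $c^* = \bigcap\{A \in L_{\mathcal{H}(I)} : A \supsetneq F^c\}$. This intersection contains $F^c$ (every such $A$ does) and is contained in $\bigcap_i F_i^c = F^c$ (since the $F_i^c$ are among the $A$), so $c^* = F^c$. If $F^c$ were meet-irreducible it would possess a unique upper cover $c$, and every $A \supsetneq F^c$ would satisfy $A \supseteq c$; this would give $c \subseteq c^* = F^c$, contradicting $c \supsetneq F^c$. Hence $F^c$ has two distinct upper covers $u_1 \neq u_2$, and from $F^c \subseteq u_1 \cap u_2 \subsetneq u_1$ together with $u_1$ covering $F^c$ we obtain $u_1 \cap u_2 = F^c$, so $F^c = u_1 \wedge u_2$ with $u_1, u_2 > F^c$, which is precisely the definition of a meet.

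The main obstacle to avoid is the tempting but flawed shortcut of simply pairing off the complements as $F_1^c \wedge (F_2^c \cap \cdots \cap F_t^c)$: the second factor can coincide with $F^c$ exactly when $F$ is already the union of $F_2, \dots, F_t$, so this naive grouping need not witness reducibility. Routing the argument through $c^*$, the intersection of all elements strictly above $F^c$ (equivalently, through the unique-cover characterization of meet-irreducibility in a finite lattice, consistent with \Cref{meetIrredMakeLattice}), sidesteps this case analysis cleanly and keeps every element involved inside $L_{\mathcal{H}(I)}$.
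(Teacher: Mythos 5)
Your proof is correct, and its core coincides with the paper's: both arguments apply De Morgan's law to turn the union $F = F_1 \cup \cdots \cup F_t$ into the intersection $F^c = F_1^c \cap \cdots \cap F_t^c$, and then use the fact that $L_{\mathcal{H}(I)}$ is intersection-closed with meet equal to intersection. Where you genuinely diverge is in the finishing step. The paper stops after observing that $F^c$ is the meet of $G_1^c, \dots, G_t^c$, leaving implicit both that these elements lie \emph{strictly} above $F^c$ (i.e.\ that the constituent edges are proper subsets of $F$) and that a $t$-ary meet of strictly larger elements yields the binary witness $F^c = a \wedge b$ with $a, b > F^c$ required by the paper's definition of meet-irreducibility. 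You make both points explicit, and you close the binary-meet gap by a different device: computing $c^* = \bigcap\{A \in L_{\mathcal{H}(I)} : A \supsetneq F^c\} = F^c$ and invoking the unique-upper-cover characterization of meet-irreducibles in a finite lattice, then producing $F^c = u_1 \wedge u_2$ from two distinct upper covers. This is valid, and it buys a uniform argument with no case analysis. One caveat: the ``flawed shortcut'' you warn against is in fact repairable without your machinery. If $F_2^c \cap \cdots \cap F_t^c = F^c$, then $F = F_2 \cup \cdots \cup F_t$ and one recurses on fewer edges; since each $F_i^c \supsetneq F^c$, the recursion can never bottom out at a single edge, so it must terminate in a genuine binary meet. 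Thus a short induction on $t$ achieves what your $c^*$ argument does, and this is presumably what the paper's terse proof takes for granted; your route trades that small induction for an appeal to the cover structure of the lattice, at the cost of somewhat heavier machinery but with the benefit of pinning down exactly where properness of the $F_i$ and $F^c \neq \hat{1}$ are used.
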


\begin{proof}
Suppose $F = \cup_{i=1}^{t} G_i$ where $G_i$ is also an edge of
$\mathcal{H}(I)$.  Then by De Morgan's Laws the corresponding elements
in $L_I$ are $F^c = \cap_{i=1}^t G_i^c$ where the notation $F^c$ means
complement in $\{1, \dots, \mu\}$.  In terms of thinking of $L_{\mathcal{H}(I)}$ as
set of sets closed under intersections, this says precisely that $F^c$
is the meet of $G_1^c, \dots, G_t^c$. 
\end{proof}

Now we are ready to show that the 2 lattices are actually the same.

\begin{theorem}\label{ThmLCMHG}
If $I$ is a square-free monomial ideal, then $L_{\mathcal{H}(I)} = L_I$. \end{theorem}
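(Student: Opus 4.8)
The plan is to prove the equality $L_{\mathcal{H}(I)} = L_I$ by producing an explicit labeling of $L_{\mathcal{H}(I)}$, verifying that it is a coordinatization via \Cref{coordinatizations}, and checking that the resulting monomial ideal is exactly $I$. Since both $L_{\mathcal{H}(I)}$ and $L_I$ are finite atomic lattices with atoms indexed by the minimal generators $m_1, \dots, m_\mu$ (equivalently by the vertex set $V = \{1,\dots,\mu\}$ of the hypergraph), it suffices to find a coordinatization of $L_{\mathcal{H}(I)}$ whose associated ideal $M_{\mathcal{M}}$ equals $I$; then the defining property of a coordinatization forces $L_{\mathcal{H}(I)} \cong L_{M_{\mathcal{M}}} = L_I$, and because the atoms correspond compatibly this isomorphism is the desired identity of lattices viewed as sets of subsets of $V$.

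First I would fix the correspondence between elements of $L_{\mathcal{H}(I)}$ and subsets of $V$: an element is a set $F^c$ (complement of an edge) or an intersection of such complements, together with the adjoined maximal element $V$. The atom corresponding to generator $m_j$ is $\{j\}$, or more precisely the support of $m_j$ in the lattice is $\{j\}$. The natural labeling to try is to assign to each variable $x_i$ the element of $L_{\mathcal{H}(I)}$ given by $F_i^c$, where $F_i = \{j \in V : x_i \mid m_j\}$ is the edge of $\mathcal{H}(I)$ coming from $x_i$; that is, label the meet-irreducible-candidate element $F_i^c$ by the variable $x_i$ (if several variables give the same edge, their product labels that element). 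The motivation is that \Cref{removeMeets} identifies which complements are meets, so the edges that are \emph{not} unions of smaller edges should give the meet-irreducibles that must be labeled to satisfy (C1).

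The key steps, in order, are: (1) verify condition (C1), namely that every meet-irreducible of $L_{\mathcal{H}(I)}$ receives a nontrivial label. By \Cref{removeMeets} a meet element corresponds to an edge that is a union of others, so the meet-irreducibles correspond to edges $F_i$ that are not such unions, and these are exactly the elements $F_i^c$ I have labeled by $x_i$; I would argue the converse direction of \Cref{removeMeets} carefully here, since (C1) needs that \emph{every} meet-irreducible is labeled. (2) Verify condition (C2): a variable $x_i$ labels only the single element $F_i^c$, so distinct variables labeling a common chain is automatic unless two variables share a label, in which case they sit on the same element and comparability is trivial; thus gcd-sharing labels always lie on a common element and hence on a chain. (3) Compute $x(a)$ for each atom $a = \{j\}$ via \eqref{IdealDef}: the product runs over labeled elements $p$ with $a \notin \lceil p \rceil$, i.e.\ over variables $x_i$ with $\{j\} \not\subseteq F_i^c$, which means $j \in F_i$, i.e.\ $x_i \mid m_j$; so $x(\{j\}) = \prod_{i : x_i \mid m_j} x_i = m_j$. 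This recovers exactly the generator $m_j$, so $M_{\mathcal{M}} = I$.

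The main obstacle I expect is step (1), the bookkeeping around (C1) and the precise converse of \Cref{removeMeets}: I must ensure that every meet-irreducible element of $L_{\mathcal{H}(I)}$ is of the form $F_i^c$ for an edge $F_i$ that actually arises from a variable (and is not itself an intersection of others), so that it genuinely receives a label, while also handling the adjoined maximal element $V$ and the possibility that some $F_i$ equals all of $V$ or is empty. A subtlety is that the same edge may come from several variables, and that the meet-closure may introduce elements $F^c$ where $F$ is a union of edges but $F$ itself happens to also be a single edge; \Cref{removeMeets} shows such an element is a meet, hence not meet-irreducible, so leaving it unlabeled is consistent with (C1). Once these edge cases are dispatched, steps (2) and (3) are routine computations, and the conclusion $L_{\mathcal{H}(I)} = L_I$ follows from the uniqueness built into the notion of coordinatization together with the atom-preserving identification of both lattices as subsets of $V$.
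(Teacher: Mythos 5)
Your proposal is correct and follows essentially the same route as the paper: label each edge complement $F_i^c$ in $L_{\mathcal{H}(I)}$ by the variable $x_i$, note (C2) holds because each variable is used only once, invoke \Cref{removeMeets} together with the fact that $L_{\mathcal{H}(I)}$ is by construction the meet-closure of the edge complements to get (C1), and compute $x(\{j\}) = \prod_{i : x_i \mid m_j} x_i = m_j$ to recover $I$. The one place you over-complicate is the worried-about ``converse'' of \Cref{removeMeets}: since you label \emph{every} edge complement and (C1) only demands that meet-irreducibles be labeled (labeling non-meet-irreducibles is harmless), it suffices that every element of $L_{\mathcal{H}(I)}$ other than an edge complement or the adjoined top is an intersection of edge complements by construction, which is exactly how the paper dispatches this point.
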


\begin{proof}
We begin by constructing a coordinatization for $L_{\mathcal{H}(I)}$,
which will hopefully produce the ideal $I$ as follows.  By Equation
\ref{IdealDef} we can see that if $F_i$ is an edge of $\mathcal{H}(I)$
corresponding to the variable $x_i$, then in $L_{\mathcal{H}(I)}$ we label the
element \[ \bigvee_{j\in [\mu]; j \not\in F_i} a_j = F_i^c \] (where $a_j$'s are the atoms of
$L_{\mathcal{H}(I)}$) with the variable $x_i$.  Note that here the
equality is a bit of an abuse of notation where on one side we are
thinking of elements as joins of atoms and on the other side we are
thinking of them as subsets of the vertex set. 

Note that this labeling by definition will satisfy
condition (C2) since each variable only gets used once, so it remains to
consider what condition (C1) means in this case. Now, consider the fact that condition (C1) requires that all meet
irreducibles of $L_{\mathcal{H}}(I)$ are labeled.  By \Cref{removeMeets}, we have a precise description of the meet
  irreducible elements as being a subset of the edges of
  $\mathcal{H}(I)$.  As we are labeling all elements of
  $\mathcal{H}(I)$, condition (C1) is satisfied and so the labeling we
  have given is in fact a coordinatization.  

Now if we can show that the coordinatization we produced in fact gives
the ideal $I$ then we will know that the lattice we coordinated is
in fact the LCM-lattice of $I$, thus proving the theorem.  By
construction, the monomial associated to the atom $a_i$ will be
the product of the variables
corresponding to the edges that contain $i$, which is precisely the
ideal $I$.

\end{proof}

This relationship between the LCM-lattice and the hypergraph is best
seen in the following example.  

\begin{example} \label{LCMHyper}
This example gives the relationship between the LCM-lattice and the hypergraph. Let $I=(ab,bcg,cdg,de,efg)=(f_1,f_2,f_3,f_4,f_5)$, then \Cref{LCMHG} is the hypergraph of the $I$ such that  $\mathcal{H}(I)=\{\{1\}=F_a,\{1,2\}=F_b,\{2,3\}=F_c,\{3,4\}=F_d,\{4,5\}=F_e,\{5\}=F_f,\{2,3,5\}=F_g\}$ . We take the compliment of  $\mathcal{H}(I)$,  which is $\{\{2,3,4,5\},\{3,4,5\},\{1,4,5\},\{1,2,5\},\{1,2,3\},\{1,2,3,4\},\{1,4\}\}$, and we find intersection of all edges to obtain $\{\{1\},\{2\},\{3\},\{4\},\{5\},\{1,2\},\{1,4\}=\{g\},\{1,5\},\{2,3\},$ $\{2,5\},\{3,4\},\{4,5\},\{1,2,3\}=\{e\},\{1,2,5\}=\{d\},\{1,4,5\}=\{c\},\{2,3,4\},\{3,4,5\}$ $=\{b\},\{1,2,3,4\}=\{f\},\{2,3,4,5\}=\{a\},\{1,2,3,4,5\}\}$. \Cref{LCMHG2} shows the LCM-lattice of $I$ and the connection.

\begin{figure}[h] 
\caption{}\label{LCMHG}
\begin{center}
\begin{tikzpicture}
\shade [shading=ball, ball color=black]  (0,0) circle (.1) node [left] {$f$} node [below] {$5$};
\draw  [shape=circle] (1,-0.5) circle (.1) node [below] {$4$} ;
\draw  [shape=circle] (2,0) circle (.1) node [right]{$3$};
\draw  [shape=circle] (2,1) circle (.1) node [right]{$2$};
\shade [shading=ball, ball color=black](1,1.5) circle (.1) node [left] {$a$} node [above] {$1$};

\draw [line width=1.2pt ] (0,0)--(1,-0.5)  node [pos=.5, below] {$e$};
\draw [line width=1.2pt ] (1,-0.5)--(2,0)  node [pos=.5, below] {$d$};
\draw [line width=1.2pt ] (2,0)--(2,1)  node [pos=.5, right] {$c$};
\draw [line width=1.2pt ] (2,1)--(1,1.5)  node [pos=.5, above] {$b$};
\path (0,0.4)--(2.5,0.4) node [pos=.5, right ] {$g$};
\path [pattern=north west lines, pattern  color=green]   (0,0)--(2,0)--(2,1)--cycle;
\end{tikzpicture}
\end{center}
\end{figure}

\begin{figure}[h] 
\caption{}\label{LCMHG2}
\begin{tikzpicture}[scale=0.32, vertices/.style={draw, circle, inner sep=2pt},every node/.style={scale=0.55}]
              \node [vertices, label=right:{\textcolor{red}{$0$}}] (0) at (-0+0,0){};
              \node [vertices, label=right:{\textcolor{red}{$e f g=\{5\}$}}] (1) at (-14+0,3){};
              \node [vertices, label=right:{\textcolor{red}{$d e=\{4\}$}}] (2) at (-14+7,3){};
              \node [vertices, label=right:{\textcolor{red}{$a b=\{1\}$}}] (12) at (-14+14,3){};
              \node [vertices, label=right:{\textcolor{red}{$c d g=\{3\}$}}] (4) at (-14+21,3){};
              \node [vertices, label=right:{\textcolor{red}{$b c g=\{2\}$}}] (7) at (-14+28,3){};
              \node [vertices, label=right:{\textcolor{red}{$d e f g=\{4,5\}$}}] (3) at (-21+0,6){};
              \node [vertices, label=right:{\textcolor{red}{$a b e f g=\{1,5\}$}}] (13) at (-21+7,6){};
              \node [vertices, label=right:{\textcolor{red}{$a b d e=\{1,4\}$}}] (14) at (-21+14,6){\textcolor{blue}{$g$}};
              \node [vertices, label=right:{\textcolor{red}{$c d e g=\{3,4\}$}}] (5) at (-21+21,6){};
              \node [vertices, label=right:{\textcolor{red}{$b c e f g=\{2,5\}$}}] (8) at (-21+28,6){};
              \node [vertices, label=right:{\textcolor{red}{$a b c g=\{1,2\}$}}] (16) at (-21+35,6){};
              \node [vertices, label=right:{\textcolor{red}{$b c d g=\{2,3\}$}}] (9) at (-21+42,6){};
              \node [vertices, label=right:{\textcolor{red}{$a b d e f g=\{1,4,5\}$}}] (15) at (-14+0,9){\textcolor{blue}{$c$}};
              \node [vertices, label=right:{\textcolor{red}{$c d e f g=\{3,4,5\}$}}] (6) at (-14+7,9){\textcolor{blue}{$b$}};
              \node [vertices, label=right:{\textcolor{red}{$a b c e f g=\{1,2,5\}$}}] (17) at (-14+14,9){\textcolor{blue}{$d$}};
              \node [vertices, label=right:{\textcolor{red}{$a b c d g=\{1,2,3\}$}}] (18) at (-14+21,9){\textcolor{blue}{$e$}};
              \node [vertices, label=right:{\textcolor{red}{$b c d e g=\{2,3,4\}$}}] (10) at (-14+28,9){};
              \node [vertices, label=right:{\textcolor{red}{$b c d e f g=\{2,3,4,5\}$}}] (11) at (-7/2+0,12){\textcolor{blue}{$a$}};
              \node [vertices, label=right:{\textcolor{red}{$a b c d e g=\{1,2,3,4\}$}}] (19) at (-7/2+7,12){\textcolor{blue}{$f$}};
              \node [vertices, label=right:{\textcolor{red}{$a b c d e f g=\{1,2,3,4,5\}$}}] (20) at (-0+0,15){};
      \foreach \to/\from in {0/1, 0/2, 0/12, 0/4, 0/7, 1/8, 1/13, 1/3, 2/5, 2/14, 2/3, 3/6, 3/15, 4/9, 4/5, 5/6, 5/10, 6/11, 7/8, 7/16, 7/9, 8/17, 8/11, 9/18, 9/10, 10/11, 10/19, 11/20, 12/16, 12/13, 12/14, 13/17, 13/15, 14/15, 14/19, 15/20, 16/17, 16/18, 17/20, 18/19, 19/20}
      \draw [-] (\to)--(\from);
      \end{tikzpicture}
      
      \end{figure}

\end{example}

Note that there can be numerous cases where
$\mathcal{H}(I) \not= \mathcal{H}(I')$ but $L_I = L_{I'}$.  In these
cases the difference here between $\mathcal{H}(I)$ and
$\mathcal{H}(I')$ has to be in the edges that do not correspond to
meet-irreducibles. \Cref{removeMeets} determines which  edges in
$\mathcal{H}(I)$ correspond to elements which are not meet-irreducible in the
corresponding $L_I$. We have the following result that is an extension of \Cref{sameBetti}

\begin{corollary}\label{sameTotalBetti}
Let $I_1$ and $I_2$ be square-free monomial ideals such that $\mathcal{H}(I_1)=\mathcal{H}(I_2)\cup F$ where $F\in \mathcal{H}(I_1)$ is  the union
of 2 or more distinct edges of $\mathcal{H}(I_1)$, then the total Betti numbers of two ideals coincide.
\end{corollary}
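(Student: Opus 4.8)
The plan is to deduce the corollary by showing that the edge $F$ is invisible to the associated lattice, after which \Cref{ThmLCMHG} and the isomorphism case of the Gasharov--Peeva--Welker theorem finish the argument. First I would invoke \Cref{ThmLCMHG} to identify $L_{I_1} = L_{\mathcal{H}(I_1)}$ and $L_{I_2} = L_{\mathcal{H}(I_2)}$, where by definition each $L_{\mathcal{H}(I_j)}$ is the meet-closure of the complemented edge set $\{\,G^c \mid G \in \mathcal{H}(I_j)\,\}$ with the full vertex set adjoined as top element. Since $\mathcal{H}(I_1)$ and $\mathcal{H}(I_2)$ share the vertex set $\{1,\dots,\mu\}$ and differ only in the single edge $F$, these two generating collections of subsets differ only in the single member $F^c$.

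The heart of the argument is to check that $F^c$ is redundant, i.e.\ already lies in the meet-closure generated by $\mathcal{H}(I_2)$, so that $L_{\mathcal{H}(I_1)} = L_{\mathcal{H}(I_2)}$. Writing $F = \bigcup_{i=1}^{t} G_i$ with the $G_i$ distinct edges of $\mathcal{H}(I_1)$, I note that each $G_i \neq F$, and since $\mathcal{H}(I_2) = \mathcal{H}(I_1) \smsm \{F\}$ every $G_i$ survives into $\mathcal{H}(I_2)$. \Cref{removeMeets} (equivalently De Morgan's law) then gives $F^c = \bigcap_{i=1}^{t} G_i^c$, exhibiting $F^c$ as a meet of elements $G_i^c$ already present in the meet-closure of $\mathcal{H}(I_2)$. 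Consequently, adjoining $F^c$ to that generating set creates no new intersections, and the two meet-closures coincide as sets of subsets of $\{1,\dots,\mu\}$.

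Putting the pieces together, $L_{I_1} = L_{\mathcal{H}(I_1)} = L_{\mathcal{H}(I_2)} = L_{I_2}$, so the two LCM-lattices are equal, and in particular isomorphic by an isomorphism that is the identity on atoms and preserves joins. The isomorphism case of the theorem of Gasharov, Peeva, and Welker recalled in \Cref{coord} (\cite{GPW}) then guarantees that a minimal free resolution of $R/I_1$ relabels to a minimal free resolution of $R/I_2$; in particular the total Betti numbers agree. I expect the only point requiring care to be the verification that the decomposing edges $G_i$ genuinely persist in $\mathcal{H}(I_2)$---that is, that none of them equals the removed edge $F$---but this is immediate, and the remainder is a direct chaining of results already established.
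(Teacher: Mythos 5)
Your proposal is correct and follows essentially the same route as the paper, whose proof is a one-sentence combination of \Cref{removeMeets}, \Cref{ThmLCMHG}, and the Gasharov--Peeva--Welker theorem from \cite{GPW} (with \cite{mapes}); you have simply made explicit the details the paper leaves implicit, namely that $F^c = \bigcap_i G_i^c$ with each $G_i$ surviving in $\mathcal{H}(I_2)$, so the two meet-closures coincide and the identity isomorphism of LCM-lattices transfers the minimal resolution.
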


\begin{proof}
Combining \Cref{removeMeets} and \Cref{ThmLCMHG} with the work of \cite{GPW} and
\cite{mapes}, we can see that removing edges $F$ which are the union of
other edges preserves the LCM-lattice and thus preserves all the total Betti
numbers.
\end{proof}

\begin{example}
In the \Cref{LCMHyper}, the edge corresponding to the variable $g$ is $\{2,3,5\}$ and it is a union of $\{2,3\}$ and $\{5\}$. Hence by \Cref{removeMeets} and \Cref{ThmLCMHG}, the LCM-lattice of the ideal is the same after we remove edge corresponding to the variable $g$. In other word by \Cref{sameTotalBetti}, the projective dimension of the hypergraph is the same as the projective
dimension of the hypergraph after we remove the edge $\{2,3,5\}$. We can see from \Cref{LCMHG2} that $g$ does not correspond to a meet-irreducible of $L_I$.
\end{example}

\begin{example}
In the \Cref{FirstHG}, the edge corresponding to the variable $q$ or
the variable $r$ is a union of 2 or more distinct edges. Hence the
projective dimension of the hypergraph is the same as the projective
dimension of the hypergraph after we remove those two edges.
\end{example}

\begin{remark}\label{explain}
Using \Cref{sameTotalBetti}, we can see that in order to further extend the
previous results on computing the projective dimension by using
combinatorial formulas on $\mathcal{H}(I)$ we need only consider
certain classes of hypergraphs which do not have edges which would be
deemed irrelevant by \Cref{removeMeets}. This explains the results of \cite{KRT} and \cite{LMa1} where they only focus on 1-skeleton of the hypergraph, or more precisely, they focus on the subgraph coming from the open vertices of the 1-skeleton. For example, in the work of \cite{LMa1} , they show that any edges that have closed vertices can be removed without impact the projective dimension (cf. Theorem 2.9 (d) \cite{LMa2}). This is because such kind of edge is a union of other edges, i.e. closed vertices. Moreover, in the work of \cite{KRT}, they only put restrictions on the open vertex subgraph and require the complete bipartite assumption because of the same reason. 
\end{remark}

\section{Bushes with higher dimensional edges}\label{BushesSec}
\begin{definition}\label{bushes}
We say a vertex is a joint on a hypergraph if its degree is at least $3$. We say a hypergraph is a bush, if its 1-skeleton has branches of
length at most 2. 
\end{definition}

The smallest case of a bush is a 2-star where there is exactly one joint
and every branch has length less than or equal 2. 

In this section, we focus on hypergraphs which are bushes and their projective dimension. More precisely, we want to see the impact of the higher dimensional edges on the projective dimension.

One technique that is used in \cite{LMa1} and \cite{LMa2} which we will need
here, is using the short exact sequences obtained by looking at colon
ideals.  Specifically there are two types of colon ideals that we are
interested in, and we explain below what each operation looks like
on the associated hypergraphs.

\begin{definition}
Let $\mathcal{H}$ be a hypergraph, and $I = I(\mathcal{H})$ be the
standard square-free monomial ideal associated to it in the polynomial
ring $R$. Let $\mathcal{G}(I)=\{m_1,\dots, m_{\mu}\}$ be the minimal generating set of $I$. Let $F$ be a
edge in $\mathcal{H}$ and let $x_F \in R$ be the variable associated
to $F$.   Also let
$v$ be a vertex in $\mathcal{H}$ and $m_v \in I$ be the monomial
generator associated to it.
\begin{itemize}
\item The hypergraph $\mathcal{H}_v : v = \mathcal{Q}_v$ is the hypergraph associated to the ideal
$I_v : m_v$ where $I_v =\mathcal{G}(I)\backslash m_{v}$, and
$\mathcal{H}_v = \mathcal{H}(I_v)$ is the hypergraph associated to the ideal $I_v$.  
\item The hypergraph $\mathcal{H} : F$, obtained by removing $F$
  in $\mathcal{H}$, is the hypergraph associated to the ideal $I : x_F$.
  \item The hypergraph $(\mathcal{H},x_F)$, obtained by adding a vertex corresponding to the variable $x_F$ in $\mathcal{H}$, is the hypergraph associated to the ideal $(I,x_F)$.

\end{itemize}
\end{definition}

The following results appearing in \cite{LMa2}
will be very useful to us in this section. We put them here for the self-containment of this work and for the reader's convenience.

\begin{theorem} \label{LaundryList2}
\hangindent\leftmargini
\hskip\labelsep
\begin{enumerate}[label={(\arabic*)},ref={(\arabic*)}]
\item  \label{pdFormulaString} (cf. Corollary 3.8 \cite{LMa1}) An open string hypergraph with $\mu$ vertices has
projective dimension $\mu-\left\lfloor \frac{\mu}{3}\right\rfloor $  
\item  \label{LMa2-2.9c} (cf. Theorem 2.9 (c) \cite{LMa2}) If
  $\mathcal{H}' \subseteq \mathcal{H}$ are hypergraphs with
  $\mu(\mathcal{H}') = \mu(\mathcal{H})$, then $\pd(\mathcal{H}')
  \leq \pd(\mathcal{H})$ where $\mu(\mathcal{*})$ denotes the number of vertices of $*$.
\item \label{LMa2-2.9d} (cf. Theorem 2.9 (d) \cite{LMa2}) Let
$\mathcal{H}', \mathcal{H}$ be hypergraphs with $\mathcal{H} =
\mathcal{H}' \cup F$ where $F = \{i_1,\dots,i_r\}$. If $\{i_j\} \in
\mathcal{H}'$ for all $j$, then $\pd(\mathcal{H}') =
\pd(\mathcal{H}:F) = \pd (\mathcal{H})$.
\item \label{LMa2-4.7,4.9} (cf. Proposition 4.7 and 4.9 \cite{LMa2})
  Let $\mathcal{H}$ be a 1-dimensional hypergraph, $w$ a vertex with
  degree at least $3$ in $\mathcal{H}$, and $S$ be a branch departing
  from $w$ with $\mu$ vertices. Suppose all the vertices of $S$ are open except the end vertex, and let $E$ be the edge connecting $w$ to $S$. Then $\pd(\mathcal{H}) = \pd(\mathcal{H}')$, where $\mathcal{H}'$ is the following hypergraph: (a) if $n\equiv1$ mod $3$,then $\mathcal{H}' =\mathcal{H}:E$; (b) if $n\equiv2$ mod $3$, then $\mathcal{H}'=\mathcal{H}_w$
\end{enumerate}
\end{theorem}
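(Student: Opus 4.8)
The plan is to recognize \Cref{LaundryList2} for what it is: a consolidation of facts already proved in \cite{LMa1} and \cite{LMa2}, bundled here for convenience. For parts (2) and (4) a fully self-contained argument would have to reproduce the short-exact-sequence inductions of those papers, so I would indicate the engine behind each rather than redo them; part (1) admits a clean stand-alone induction; and part (3) can be obtained immediately from the machinery built earlier in this paper. I would therefore open the proof by disposing of part (3) first, since it is the one item for which the present paper's results give a short independent derivation.

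For part (3), write $F = \{i_1,\dots,i_r\}$ with each $\{i_j\}\in\mathcal{H}'\subseteq\mathcal{H}$, and observe that $F = \bigcup_{j=1}^{r}\{i_j\}$ is literally a union of $r\ge 2$ distinct edges of $\mathcal{H}$. By \Cref{removeMeets} the element of $L_{\mathcal{H}}$ corresponding to $F$ is not meet-irreducible, so by \Cref{sameTotalBetti} the ideals attached to $\mathcal{H}$ and to $\mathcal{H}'=\mathcal{H}\setminus\{F\}$ have the same total Betti numbers; in particular $\pd(\mathcal{H})=\pd(\mathcal{H}')$. Since the colon hypergraph $\mathcal{H}:F$ is by definition $\mathcal{H}$ with the edge $F$ deleted, we have $\mathcal{H}:F=\mathcal{H}'$, which closes the chain of equalities. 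This step is essentially free once \Cref{sameTotalBetti} is available, and it is exactly the point made in \Cref{explain}.

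For part (1), I would realize the \emph{open string} as a path-type $1$-dimensional hypergraph on $\mu$ vertices and induct on $\mu$. Choosing an end vertex $v$, the defined colon operations produce the subhypergraphs $\mathcal{H}_v$ and $\mathcal{H}_v:v$, which are again shorter strings, and the short exact sequence $0 \to R/(I_v:m_v) \to R/I_v \to R/I \to 0$ (up to a degree shift) relates $\pd(\mathcal{H})$ to the projective dimensions of strings on $\mu-2$ and $\mu-3$ vertices via the long exact sequence in Tor. Tracking $\mu \bmod 3$ then collapses the recursion to the closed form $\mu-\lfloor \mu/3\rfloor$; the only care needed is in the base cases $\mu\in\{1,2,3\}$ and in checking that the floor function obeys the same recursion.

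For the monotonicity in part (2), I would use that passing from $\mathcal{H}$ to a subhypergraph $\mathcal{H}'$ on the same vertex set amounts to deleting edges, i.e. setting the corresponding variables equal to $1$ in the generators. Because $\mu(\mathcal{H}')=\mu(\mathcal{H})$, this induces a join-preserving surjection $L_{\mathcal{H}}\twoheadrightarrow L_{\mathcal{H}'}$ that is a bijection on atoms, so Theorem 3.3 of \cite{GPW} relabels a minimal resolution of $R/I(\mathcal{H})$ into a (possibly non-minimal) resolution of $R/I(\mathcal{H}')$, whence $\pd(\mathcal{H}')\le\pd(\mathcal{H})$. Part (4) is the deepest item and is where I expect the real work to lie: the branch-reduction according to $\mu \bmod 3$ requires iterating the colon short exact sequences along the branch $S$, controlling the vanishing and non-vanishing of the relevant Tor modules at each step, and invoking the string formula of part (1) at the joint $w$. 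The main obstacle is the bookkeeping in part (4), namely ensuring that the extremal Betti number governing $\pd$ survives each reduction; this is precisely why the paper quotes Propositions 4.7 and 4.9 of \cite{LMa2} rather than reproving them.
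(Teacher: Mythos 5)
Your reading is correct: the paper offers no proof of \Cref{LaundryList2} at all---it is stated as a collection of results imported from \cite{LMa1} and \cite{LMa2} ``for the self-containment of this work and for the reader's convenience,'' so deferring parts (1), (2), and (4) to those sources is precisely what the authors do. Your independent derivation of part (3) from \Cref{removeMeets} and \Cref{sameTotalBetti}, using that $\mathcal{H}:F=\mathcal{H}'$ under the paper's definition of the colon operation, is sound and coincides with the observation the authors themselves record in \Cref{explain}.
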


The following lemma extends the result of  \cite{LMa2}, because of the new connection of LCM-lattics and hypergraphs. It is a special case in the work of \cite{KRT}. We provide a different proof here. 

\begin{lemma}\label{2Stars}Let $\mathcal{H}$ be a hypergraph such that its 1-skeleton is a 2-star. Then
$\pd(\mathcal{H})=|V(\mathcal{H})|-1$.
\end{lemma}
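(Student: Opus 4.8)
The plan is to peel the branches off the joint using the colon and deletion reductions collected in \Cref{LaundryList2} until the hypergraph falls apart into disconnected pieces whose projective dimension is immediate, and then to recombine these contributions with the additivity of $\pd$ over disconnected hypergraphs. First I would fix notation: let $w$ be the unique joint, so $\deg w \geq 3$, and split the branches into the $a$ branches of length $1$ and the $b$ branches of length $2$. Since the $1$-skeleton is a $2$-star we have $a+b = \deg w \geq 3$ and $|V(\mathcal{H})| = 1 + a + 2b$, so the target is $\pd(\mathcal{H}) = a + 2b$. Before the main argument I would record two normalizations. Each length-$1$ branch ends in a closed leaf and each length-$2$ branch has an open interior vertex together with a closed leaf, since otherwise the corresponding generator would divide $m_w$ and fail to be minimal, contradicting that the vertices of a separated hypergraph are a minimal generating set. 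Moreover, any edge of cardinality $\geq 3$ that is a union of smaller edges may be deleted without changing the total Betti numbers by \Cref{sameTotalBetti}, so I may assume $\mathcal{H}$ is $1$-dimensional; I expect this reduction of the higher edges to be the least routine piece of bookkeeping.

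Next, suppose $b \geq 1$, i.e.\ there is at least one length-$2$ branch $S$. Since $S$ has two vertices, $2 \equiv 2 \pmod 3$, and $\deg w \geq 3$, the second case of the branch-reduction formula in \Cref{LaundryList2} gives $\pd(\mathcal{H}) = \pd(\mathcal{H}_w)$, where $\mathcal{H}_w$ deletes the generator at $w$. Deleting $w$ severs every branch from the others, so $\mathcal{H}_w$ is a disjoint union of the detached branches: a length-$1$ branch becomes a single isolated closed vertex, whose ideal is principal and so has $\pd = 1$, while a length-$2$ branch becomes a single edge with both endpoints closed, whose ideal has the form $q\cdot(p,s)$ for variables $p,q,s$ and hence has $\pd = 2$. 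By additivity of $\pd$ over connected components, $\pd(\mathcal{H}) = \pd(\mathcal{H}_w) = a\cdot 1 + b\cdot 2 = a + 2b = |V(\mathcal{H})| - 1$, as claimed. The attractive feature here is that every step is an equality, so no separate lower bound is needed.

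The remaining case $b = 0$ is the pure star $K_{1,a}$ with $a \geq 3$ and closed leaves, and this is where the argument is most delicate: the second case of \Cref{LaundryList2} is unavailable because no branch has length $\equiv 2 \pmod 3$, and a naive mapping-cone bound built from the complete intersection $(m_{v_1},\dots,m_{v_a})$ overshoots by one, so one must show the top syzygy cancels. I would instead apply the first case of the branch-reduction formula to one length-$1$ branch (here $1 \equiv 1 \pmod 3$ and $\deg w \geq 3$), obtaining $\pd(K_{1,a}) = \pd(K_{1,a}:E) = \pd(K_{1,a-1}) + 1$, the $+1$ coming from the detached isolated closed leaf. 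Iterating while the center still has degree $\geq 3$ brings us down to $K_{1,2}$, giving $\pd(K_{1,a}) = \pd(K_{1,2}) + (a-2)$, and a direct computation with the three generators $(pq,\,ps_1,\,qs_2)$ shows $\pd(K_{1,2}) = 2$; hence $\pd(K_{1,a}) = a = |V(\mathcal{H})| - 1$. Thus the single genuine obstacle is isolated in this base case, namely confirming that the top Betti number of the pure star vanishes. I expect this to be the hard part, and I would double-check it against \Cref{ThmLCMHG}: in the LCM-lattice $L_{\mathcal{H}}$ the open interval below $\hat 1$ is a cone with apex the atom $w$, hence contractible, which forces $\beta_{|V(\mathcal{H})|} = 0$ and confirms $\pd(\mathcal{H}) = |V(\mathcal{H})| - 1$ independently of the reduction.
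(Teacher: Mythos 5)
Your reduction to the $1$-dimensional case is where the argument breaks. \Cref{sameTotalBetti} (equivalently \Cref{removeMeets}) only permits deleting a higher-dimensional edge $F$ that is a \emph{union of two or more distinct edges} of $\mathcal{H}$; the hypothesis of the lemma constrains only the $1$-skeleton, so $\mathcal{H}$ may carry higher-dimensional edges that are not unions of smaller ones --- for instance $F=\{u_1,u_2,u_3\}$ where the $u_i$ are open interior vertices of three different length-$2$ branches, so that no singleton or pair contained in $F$ is an edge. Such an $F$ cannot be removed by your normalization, and ``I may assume $\mathcal{H}$ is $1$-dimensional'' is precisely the step you flagged but never supplied. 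Handling these edges is in fact the entire content of the paper's proof: it quotes the $1$-dimensional case from Proposition 4.16 of \cite{LMa2}, and for a higher edge $F\neq\cup F_i$ it inducts on the number of higher-dimensional edges using the short exact sequence
\[
0\leftarrow (\mathcal{H},x_{F})\leftarrow\mathcal{H}\leftarrow (\mathcal{H}:F) \leftarrow 0,
\]
where $\mathcal{H}:F$ is a $2$-star on the same vertex set with one fewer higher edge, so $\pd(\mathcal{H}:F)=|V(\mathcal{H})|-1$ by induction, while $(\mathcal{H},x_F)$ is the disjoint union of $\mathcal{H}_{V(F)}$ and an isolated vertex, so $\pd(\mathcal{H},x_F)\leq |V(\mathcal{H})|-3+1$; combining the resulting bound with $\pd(\mathcal{H}:F)\leq\pd(\mathcal{H})$ from \Cref{LaundryList2}\ref{LMa2-2.9c} forces equality. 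Your proof needs this (or an equivalent) argument; without it the lemma is only proved under the extra hypothesis that every higher edge is a union of edges.

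Two secondary issues in the part you do carry out (which the paper itself simply cites from \cite{LMa2}). First, your claim that the interior vertex $u$ of a length-$2$ branch must be open does not follow from minimal generation: $m_u$ is divisible by the variable of the edge joining $u$ to its leaf, which does not divide $m_w$, so $u$ can be closed without any generator dividing another (only your leaf-closedness argument is valid). Since \Cref{LaundryList2}\ref{LMa2-4.7,4.9} assumes all vertices of the branch except the end vertex are open, the case of a closed interior vertex is not covered by your peeling step as written. Second, your LCM-lattice ``double-check'' proves less than you assert: contractibility of the open interval below $\hat 1$ kills only $\beta_{|V(\mathcal{H})|}$ at the top multidegree (which, by the Taylor complex bound, is the only multidegree where homological degree $|V(\mathcal{H})|$ can occur), hence it yields the upper bound $\pd(\mathcal{H})\leq |V(\mathcal{H})|-1$ but no lower bound, so it cannot independently confirm the equality; moreover the cone-with-apex-$m_w$ claim requires $w$ to be open, since if $\{w\}\in\mathcal{H}$ the join of all the leaf atoms is a proper element whose join with $m_w$ is $\hat 1$.
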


\begin{proof}
Suppose there is no higher dimensional edges on $\mathcal{H}$ then it is true
by Proposition 4.16 of \cite{LMa2}. Assume there is an edge $F$ on $\mathcal{H}$ such that
it is at least 2-dimensional. We assume that $F\neq\cup F_{i}$ otherwise
we are done because of \Cref{removeMeets}. We use induction on the number of edges on $\mathcal{H}$ such
that their dimension is at least 2.

Suppose $F$ is the only higher edge on $\mathcal{H}$ such that it
has dimension at least 2. Notice that the number of vertices of
$F$ must be at least 3. Therefore the number of vertices of $\mathcal{H}_{V(F)}$
is at most $|V(\mathcal{H})|-3$ and the projective dimension of $\mathcal{H}_{V(F)}$
is at most $|V(\mathcal{H})|-3$. We consider the short exact sequence

\[0\leftarrow (\mathcal{H},x_{F})\leftarrow\mathcal{H}\leftarrow (\mathcal{H}:F) \leftarrow0\]
where $x_{F}$ is the variable corresponding to the edge $F$. The
hypergraph $\mathcal{H}:F$ is a 2-star without higher dimensional
edge with the same vertices of $\mathcal{H}$ and hence $\text{pd}(\mathcal{H}:F)=|V(\mathcal{H})|-1$.
Moreover, by \Cref{LaundryList2}\ref{LMa2-2.9c}, we have $\text{pd}(\mathcal{H}:F)\leq\text{pd}(\mathcal{H})$.
We observe that the hypergraph $(\mathcal{H},x_{F})$ is the union
of $\mathcal{H}_{V(F)}$ and an isolated vertex corresponding to $x_{F}$,
hence it has projective dimension at most $|V(\mathcal{H})|-3+1=|V(\mathcal{H})|-2$.
Using the short exact sequence on the projective dimension, we have
\[
\text{pd}(\mathcal{H})\leq\max\{\text{pd}(\mathcal{H}:F),\text{pd}(\mathcal{H},x_{F})\}=\text{pd}(\mathcal{H}:F)\leq\text{pd}(\mathcal{H}).
\]

Now we assume that $\mathcal{H}$ has more than one higher dimensional
edge and $F$ is one of them. The induction hypothesis gives $\text{pd}(\mathcal{H}:F)=|V(\mathcal{H}:F)|-1=|V(\mathcal{H})|-1$
since $(\mathcal{H}:F)$ has the same number of vertices of $\mathcal{H}$
with one less higher dimensional edge $F$. As before, we use the
same short exact sequence above and the fact that $\text{pd}(\mathcal{H},x_{F})\leq|V(\mathcal{H})|-2$
to obtain $\text{pd}(\mathcal{H})=\text{pd}(\mathcal{H}:F)=|V(\mathcal{H})|-1$. 
\end{proof}

The following lemma is a rewrite of the result in \cite{LMa2} where they did not see the more broader implication.

\begin{lemma}\label{RemoveJoint}Let $J$ be a joint on a hypergraph $\mathcal{H}$ such that
there is no higher dimensional edge on the branches of $J$ and there
is a branch on $J$ with length 2. Then $\pd(\mathcal{H})=\pd(\mathcal{H}')$
where $\mathcal{H}'$ is obtained by removing $J$ from $\mathcal{H}$. 
\end{lemma}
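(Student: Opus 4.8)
The plan is to reduce the statement to the one-dimensional case \Cref{LaundryList2}\ref{LMa2-4.7,4.9}(b) applied to the length-$2$ branch, and then to argue that the higher-dimensional edges of $\mathcal{H}$ that avoid the branches of $J$ are inert in the relevant short exact sequences. First I would fix notation: write the length-$2$ branch departing from $J$ as a path $J - v_1 - v_2$, where $E = \{J, v_1\}$ is the edge joining $J$ to the branch and $v_2$ is the end vertex. Since $\mathcal{H}$ is separated and $v_2$ is an endpoint of the $1$-skeleton, $v_2$ must be closed while $v_1$ is open, so this branch has exactly the shape required by \Cref{LaundryList2}\ref{LMa2-4.7,4.9} with $\mu = 2$ vertices; as $2 \equiv 2 \pmod 3$ we land in case (b), whose conclusion is $\pd(\mathcal{H}) = \pd(\mathcal{H}_J)$. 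Because removing the joint $J$ from $\mathcal{H}$ is precisely the vertex deletion that produces $\mathcal{H}_J$, identifying $\mathcal{H}' = \mathcal{H}_J$ would complete the proof.

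Rather than quoting \Cref{LaundryList2}\ref{LMa2-4.7,4.9} (which is stated only for one-dimensional $\mathcal{H}$), I would reprove case (b) directly with the colon short exact sequence, so that global one-dimensionality is never used. Writing $I_J$ for the ideal generated by all minimal generators except $m_J$, the inclusion $I_J \subseteq I$ gives
\[
0 \to R/(I_J : m_J) \to R/I_J \to R/I \to 0,
\]
whose three terms correspond to $\mathcal{Q}_J = \mathcal{H}_J : J$, to $\mathcal{H}_J = \mathcal{H}'$, and to $\mathcal{H}$. The standard projective-dimension inequalities for a short exact sequence then reduce the whole lemma to the single claim $\pd(\mathcal{Q}_J) < \pd(\mathcal{H}')$: granting it, the sequence forces both $\pd(\mathcal{H}) \le \pd(\mathcal{H}')$ and $\pd(\mathcal{H}') \le \pd(\mathcal{H})$, hence equality.

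To establish $\pd(\mathcal{Q}_J) < \pd(\mathcal{H}')$ I would analyze the colon by $m_J$ locally on the branch: forming $I_J : m_J$ strips the variable $x_E$ from $m_{v_1}$, which then divides $m_{v_2}$ and renders $v_2$ redundant, so the two branch vertices of $\mathcal{H}'$ collapse to a single vertex of $\mathcal{Q}_J$. This strict drop, combined with the projective dimensions of the detached string components (computed as in \cite{LMa2} via \Cref{LaundryList2}\ref{pdFormulaString}), should yield the inequality. I expect the main obstacle to be controlling the higher-dimensional edges here, since an edge of cardinality $\ge 3$ through $J$ genuinely changes when we pass to the colon by $m_J$. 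The hypothesis that no higher-dimensional edge lies on the branches of $J$ keeps $v_1$ and $v_2$ meeting only one-dimensional edges, so the local collapse above is untouched by them; for the higher edges meeting $J$ elsewhere, I would first delete any that are unions of lower edges via \Cref{removeMeets} and \Cref{sameTotalBetti} (which changes no total Betti number), and then argue that the surviving higher edges can only lower projective dimension under the colon, using the monotonicity in \Cref{LaundryList2}\ref{LMa2-2.9c}. Verifying carefully that these edges never raise $\pd(\mathcal{Q}_J)$ to meet or exceed $\pd(\mathcal{H}')$ is the delicate point of the argument, and it is exactly the ``broader implication'' that the one-dimensional treatment in \cite{LMa2} did not isolate.
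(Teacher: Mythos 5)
Your overall architecture is the right one, and it is in fact the same engine the paper relies on: the paper's proof of \Cref{RemoveJoint} is a one-line observation that the proof of Proposition 4.9 of \cite{LMa2} (i.e.\ \Cref{LaundryList2}\ref{LMa2-4.7,4.9}, case (b), applied to the length-$2$ branch) runs verbatim once one notes that the only hypothesis that proof actually uses is that the branches of $J$ carry no higher-dimensional edges. Your colon short exact sequence $0\to R/(I_J:m_J)\to R/I_J\to R/I\to 0$ and the reduction to the single strict inequality $\pd(\mathcal{Q}_J)<\pd(\mathcal{H}')$ correctly reproduce the skeleton of that argument. The genuine gap is that you never prove this inequality: you flag it yourself as ``the delicate point,'' and the tools you propose cannot close it. \Cref{LaundryList2}\ref{LMa2-2.9c} compares hypergraphs only when one is a subhypergraph of the other \emph{on the same vertex set}, whereas $\mathcal{Q}_J$ is not a subhypergraph of $\mathcal{H}'$: the colon by $m_J$ strips every variable of $m_J$ from the neighboring generators, which can render generators redundant and merge or delete vertices --- this is exactly the collapse you exploit on the distinguished branch, and it also happens on the other branches of $J$. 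So comparing $\mathcal{Q}_J$ with $\mathcal{H}'$ away from the length-$2$ branch requires the component-by-component inductive analysis that constitutes the actual content of Propositions 4.7 and 4.9 of \cite{LMa2}; the assertion that surviving higher edges ``can only lower projective dimension under the colon'' is unsupported, and monotonicity in fact cuts against you (additional edges can only raise $\pd(\mathcal{Q}_J)$, threatening precisely the upper bound you need). Either carry out that induction, or do what the paper does: cite the proof of \cite{LMa2} and check hypothesis-by-hypothesis that global one-dimensionality is never invoked, only the local cleanliness of the branches of $J$.

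There is also a false step at the outset: separatedness does not force $v_1$ to be open. Separation only prevents one generator from dividing another; it does force the end vertex $v_2$ to be closed (any edge containing $v_2$ other than $\{v_2\}$ would contain $v_1$), but $\{v_1\}\in\mathcal{H}$ is perfectly compatible with separation. The openness of the intermediate branch vertex is an assumption imported from Proposition 4.9 of \cite{LMa2} (``all the vertices of $S$ are open except the end vertex''), and in the paper's application it is secured not by separation but by the hypothesis in \Cref{Remove2Star} that $\mathcal{H}$ has no connected closed vertices (a closed $v_1$ would be adjacent to the closed $v_2$). This matters for your argument: if $v_1$ is closed, then in $\mathcal{Q}_J$ the colonned generator is $x_{\{v_1\}}x_{\{v_1,v_2\}}$, which does not divide $m_{v_2}=x_{\{v_1,v_2\}}x_{\{v_2\}}$, so the collapse of $v_2$ --- the source of the strict drop your whole reduction rests on --- does not occur. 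You should either add the openness of $v_1$ as an explicit hypothesis or treat the closed case separately.
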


\begin{proof} The proof follows exclusively as the proof of Proposition 4.9 of \cite{LMa2} or \Cref{LaundryList2}\ref{LMa2-4.7,4.9}. The only assumption that is needed in the proof of  Proposition 4.9 of \cite{LMa2} is that $J$ has no higher dimensional edge on the branches of $J$.
\end{proof}

We conclude this section with the following proposition which is the extension of results of \cite{LMa2}. Notice that the connected closed vertices assumption is harmless via \Cref{LaundryList2} \ref{LMa2-2.9d}. This also shows that higher dimensional edges can be removed or disregarded with the new connection we built from previous sections.

\begin{proposition}
\label{Remove2Star}Let $\mathcal{H}$ be a bush and it does not have connected
closed vertices. If all the higher dimensional edges of $\mathcal{H}$
have vertices on the branches of the same joint, then $\pd(\mathcal{H})=\pd(\mathcal{H}')$
where $\mathcal{H}'$ is obtained by removing all of joints of $\mathcal{H}$
having branches of length 2.
\end{proposition}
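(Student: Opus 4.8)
The plan is to reduce everything to the two results already in hand: \Cref{RemoveJoint}, which excises a single length-$2$-branch joint whose branches carry no higher-dimensional edge, and \Cref{2Stars} together with the colon short exact sequence behind it, which governs higher-dimensional edges clustered at one joint. Fix the distinguished joint $J_0$ on whose branches all higher-dimensional edges lie, and set up a two-layer induction: an outer pass removing the joints different from $J_0$, and an inner pass peeling the higher-dimensional edges at $J_0$.

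First I would remove the other joints. If $J \ne J_0$ is a joint with a branch of length $2$, then by hypothesis no higher-dimensional edge meets the branches of $J$, so \Cref{RemoveJoint} applies and yields $\pd(\mathcal{H}) = \pd(\widehat{\mathcal{H}})$, where $\widehat{\mathcal{H}}$ is $\mathcal{H}$ with $J$ removed. Removing $J$ only detaches its own branches and leaves the local picture at $J_0$ — its branches and the higher-dimensional edges sitting on them — untouched, so the hypotheses persist and I may iterate. After finitely many steps $J_0$ is the only joint still carrying a length-$2$ branch, and $\mathcal{H}$ has split into the connected component $C$ containing $J_0$ together with a disjoint union of purely $1$-dimensional bush pieces whose remaining joints have no length-$2$ branch. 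Since $\pd$ is additive over disjoint unions and forming $\mathcal{H}'$ deletes none of those remaining joints, it suffices to prove the equality on $C$; moreover if $J_0$ itself has no length-$2$ branch the reduction has already produced $\mathcal{H}'$ and we are finished.

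It remains to remove $J_0$ when it does carry a length-$2$ branch, and here the higher-dimensional edges block a direct appeal to \Cref{RemoveJoint}. I would induct on the number of higher-dimensional edges at $J_0$, imitating \Cref{2Stars}. After discarding, via \Cref{removeMeets} and \Cref{sameTotalBetti}, any higher edge that is a union of other edges (these change neither the LCM-lattice nor the total Betti numbers), pick a genuine higher edge $F$ and use
\[
0 \leftarrow (\mathcal{H},x_F) \leftarrow \mathcal{H} \leftarrow (\mathcal{H}:F) \leftarrow 0,
\]
which gives $\pd(\mathcal{H}) \le \max\{\pd(\mathcal{H}:F),\,\pd(\mathcal{H},x_F)\}$, while \Cref{LaundryList2}\ref{LMa2-2.9c} applied to $\mathcal{H}:F \subseteq \mathcal{H}$ (same vertex set) gives the reverse bound $\pd(\mathcal{H}:F) \le \pd(\mathcal{H})$. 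The hypergraph $\mathcal{H}:F$ is again a bush meeting all the hypotheses with one fewer higher-dimensional edge, so the inner induction supplies $\pd(\mathcal{H}:F) = \pd(\mathcal{H}')$, the base case being exactly \Cref{RemoveJoint} applied to the now higher-edge-free $J_0$. Consequently the proof collapses to the single inequality $\pd(\mathcal{H},x_F) \le \pd(\mathcal{H}:F)$.

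This inequality is the main obstacle. As in \Cref{2Stars} one identifies $(\mathcal{H},x_F)$ with $\mathcal{H}_{V(F)} \sqcup \{x_F\}$, the isolated vertex $x_F$ contributing $1$, so the claim reads $\pd(\mathcal{H}_{V(F)}) + 1 \le \pd(\mathcal{H}:F)$. Since $V(F)$ and $F$ live entirely in the component $C$, additivity reduces this to the local estimate $\pd(C_{V(F)}) + 1 \le \pd(C:F)$ at the $2$-star-type configuration around $J_0$, which generalizes the vertex count $\pd = |V|-1$ used for genuine $2$-stars; the branch-length-$\le 2$ structure, the absence of connected closed vertices, and the bound $|V(F)| \ge 3$ are precisely what force the projective dimension to fall by enough when the vertices of $F$ are deleted. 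Carrying out this local estimate, and verifying the bookkeeping that neither the outer joint removals nor the discarding of union-edges alters the target $\pd(\mathcal{H}')$, is where the genuine work lies.
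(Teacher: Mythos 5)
Your architecture is the same as the paper's (induction on joints plus induction on higher-dimensional edges, the colon short exact sequence, and \Cref{LaundryList2}\ref{LMa2-2.9c} for the reverse bound), but the proof is not complete: you explicitly defer the decisive inequality $\pd(\mathcal{H},x_F)\le\pd(\mathcal{H}:F)$, reformulated as your ``local estimate'' $\pd(C_{V(F)})+1\le \pd(C:F)$, and end by saying that carrying it out ``is where the genuine work lies.'' That estimate \emph{is} the proposition; everything before it is routine bookkeeping, so as written this is a genuine gap, not a proof. Moreover, attacking that estimate head-on is the hard way: $C_{V(F)}$ and $C:F$ are quite different hypergraphs, and nothing you set up (branch lengths $\le 2$, no adjacent closed vertices, $|V(F)|\ge 3$) directly converts into a comparison of their projective dimensions.

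The idea you are missing is that the inner induction hypothesis should be applied to \emph{both} sides of the short exact sequence, not just to $\mathcal{H}:F$. Since $(\mathcal{H},x_F)$ is $\mathcal{H}_{V(F)}$ together with an isolated closed vertex, it satisfies the hypotheses with strictly fewer higher-dimensional edges on the branches of $J$, so induction gives $\pd(\mathcal{H},x_F)=\pd((\mathcal{H},x_F)')$ alongside $\pd(\mathcal{H}:F)=\pd((\mathcal{H}:F)')$. After priming, the comparison trivializes: $(\mathcal{H}:F)'$ and $(\mathcal{H},x_F)'$ are unions of the \emph{same} hypergraph $(\mathcal{H}:F)'_{V(J)}=(\mathcal{H},x_F)'_{V(J)}$ with the leftover branch vertices of $J$, and those vertices are all closed (the branches have length at most $2$ and $J$ has been deleted), so by \Cref{LaundryList2}\ref{LMa2-2.9d} each branch part contributes exactly its vertex count to the projective dimension. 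One then simply counts: $|V(J)|-1$ branch vertices survive in $(\mathcal{H}:F)'$ versus $|V(J)|-|V(F)|+1$ in $(\mathcal{H},x_F)'$, and $|V(F)|\ge 3$ yields the strict inequality $\pd(\mathcal{H}:F)>\pd(\mathcal{H},x_F)$, closing the argument. (Your outer pass removing the joints other than $J_0$ first, and your preliminary discarding of union edges via \Cref{removeMeets} and \Cref{sameTotalBetti}, are harmless reorganizations of the paper's single induction; the only essential missing step is the reduction of both terms of the sequence to their primed forms and the closed-vertex count.)
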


\begin{proof}
We use induction on the number of joints and number of higher dimensional
edges on the joints. Suppose $\mathcal{H}$ only has one joint and
this joint has branches length 1, then nothing to be proven. Suppose that $\mathcal{H}$ has an unique joint with at least one branch of
length 2. Then by \Cref{2Stars}, we are done.

Suppose $\mathcal{H}$ has at least two joints, and we assume that
$J$ is a joint having branches of length 2. Suppose there is
no higher dimensional edges on branches of $J$ then by \Cref{RemoveJoint}, $\pd(\mathcal{H})=\pd(\mathcal{H}_{J})$
where $\mathcal{H}_{J}$ is the hypergraph obtained from $\mathcal{H}$
by removing the joint $J$. Notice that $\mathcal{H}_{J}$
is a unions of branches of $J$ and another hypergraph satisfies
the assumptions of the theorem. Moreover, the vertices of branches
of $J$ all become closed in $\mathcal{H}_{J}$ because the
length of branches are at most 2. Let $V(J)$ be the vertex set
of all branches of $J$ including $J$ and $\mathcal{H}_{V(J)}$
be the hypergraph obtained from $\mathcal{H}$ by removing all the
vertices of $V(J)$ and $\mathcal{H}_{V(J)}'$ be the hypergraph
obtained from $\mathcal{H}_{V(J)}$ by removing all of the joints
of $\mathcal{H}_{V(J)}$ having branches of length 2. Then $\pd(\mathcal{H}_{J})=|V(J)|-1+\pd(\mathcal{H}_{V(J)})=|V(J)|-1+\pd(\mathcal{H}_{V(J)}')$
by induction. Since $\mathcal{H}'$ is a union of branches of $J$
without $J$ and $\mathcal{H}_{V(J)}'$, we have $\pd(\mathcal{H}')=\pd(\mathcal{H}_{J})=\pd(\mathcal{H})$.

Suppose the branches of $J$ has at least one higher dimensional
edge. Let $F$ be one of higher dimensional edge and $x_F$ be the variable corresponding to the edge. We consider the same short exact
sequence:\[
0\leftarrow (\mathcal{H},x_F)\leftarrow\mathcal{H}\leftarrow \mathcal{H}:F \leftarrow0.
\]
Notice that $(\mathcal{H}:F)$ is a hypergraph obtained from $\mathcal{H}$ with the edge $F$ removed. By induction hypothesis, $\pd(\mathcal{H}:F)=\pd((\mathcal{H}:F)')$
where $(\mathcal{H}:F)'$ is obtained from $\mathcal{H}:F$ by removing
joints having branches of length 2. Since $J$ is a joint with
branches of length 2, $J$ will be removed in $\mathcal{H}'$
and $(\mathcal{H}:F)'$. Moreover, all the vertices of branches of
$J$ will become closed because the branches have length at most
2. By \Cref{LaundryList2}\ref{LMa2-2.9d} again, we have $\pd(\mathcal{H}:F)'=\pd(\mathcal{H}')$.
We are left to show $\pd(\mathcal{H}:F)=\pd(\mathcal{H})$
as before. With the short exact sequence it is sufficient to show
that $\pd(\mathcal{H}:F)>\pd(\mathcal{H},x_F)$. 

By induction on the number of higher dimensional edges on the branches
of $J$, we have $\pd(\mathcal{H},x_F)=\pd((\mathcal{H},x_F)')$ because $(\mathcal{H},x_F)$ has no edge $F$ on the branches of $J$.
Once we show $\pd((\mathcal{H}:F)')>\pd((\mathcal{H},x_F)')$
then we are done. The hypergraphs $(\mathcal{H}:F)'$
and $(\mathcal{H},x_F)'$ are unions of branches of $J$ and the
hypergraph $(\mathcal{H}:F)'_{V(J)}=(\mathcal{H},x_F)'_{V(J)}$.  We now just need to compare
the structure of branches of $J$ on $(\mathcal{H}:F)'$
and $(\mathcal{H},x_F)'$.
The vertices of branches of $J$ on
$(\mathcal{H}:F)'$ and $(\mathcal{H},x_F)'$ are closed because $J$
is a joint with branches of length 2. Hence the projective
dimension of the branches of $J$ is the number of vertices. The number of vertices on the branches of $J$ in $(\mathcal{H}:F)'$
is $|V(J)|-1$ but the number of vertices on the branches of $J$
in $(\mathcal{H},x_F)'$ is $|V(J)|-|V(F)|+1<|V(J)|-1$. Hence
we have $\pd(\mathcal{H}:F)=\pd((\mathcal{H}:F)')>\pd((\mathcal{H},x_F)')=\pd(\mathcal{H},x_F)$.
\end{proof}

\Cref{Remove2Star} provides us an inducting process to obtain the projective dimension of a hypergraph or a square-free monomial. One just needs to remove joints one by one until all of the branches of length 2 are seperated. This actually covers a large class of ideals. In the next section, we provide the process to see the efficient reduction.

\section{Appendix: algorithmic procedures and one example}

We say a hypergraph $\mathcal{H}$ with $V=[\mu]$ is a
\emph{string} if $\{i, i+1\}$ is in $\mathcal{H}$ for all $i = 1,
\dots, \mu-1$, and the only edges containing $i$ are $\{i-1, i\}$,
$\{i, i+1\}$ and possibly $\{i\}$.  Also, $\mathcal{H}$ is a \emph{$\mu$-cycle} if $\mathcal{H} = \tilde{\mathcal{H}}
  \cup \{\mu,1\}$ where $\tilde{\mathcal{H}}$ is a string. Let $\mathcal{H}$ be a hypergraph satisfies the assumptions of \Cref{Remove2Star}. Further more, if $\mathcal{H}'$ is a union of bushes, 2-star strings, and cycles of 2-stars,
then one can obtain $\pd(\mathcal{H})$ by first removing all
the joints having branches of length 2, and then one can apply the Proposition 4.18 in \cite{LMa2} to obtain the projective dimension
of $\mathcal{H}$. This is because all the higher dimensional edges
on the branches of joints of length 2 can be removed in $\mathcal{H}'$
by \Cref{LaundryList2}\ref{LMa2-2.9d} and the fact that all the
vertices on the branches of joints of length 2 become closed in $\mathcal{H}'$. In this section, we present algorithmic procedures to compute the projective dimension of a bush hypergraph. We use Algorithm A.1 in \cite{LMa2} to decide if a vertex is a joint or an endpoint.  We write $d(i)$ as the degree of any given vertex $i$ in $\mathcal{H}$. We just need to know if $d(i)=0$, $1$, $2$ or greater than $2$ (a joint) for the purpose of computation.

The following result provides an algorithm to obtain the hypergraph $\mathcal{H'}$ in the statement of \Cref{Remove2Star}. We use the variable $i$ to detect the vertices with degree one (if any). The variable $j$ runs through the other vertices looking for neighbors of $i$, and $k$ looks for the other neighbor of $j$ (if any).

\begin{algorithm}\label{removejoint2}
Input: A connected hypergraph $\mathcal{H}$ that is a bush and all the higher dimensional edges of $\mathcal{H}$
have vertices on the branches of the same joint.  Let the vertex set be $V(\mathcal{H})=\{1,2,\cdots,\mu\}$. The output is: a hypergraph $\mathcal{H'}$ such that $\pd(\mathcal{H})=\pd(\mathcal{H'})$.

\begin{itemize}

\item [Step 0:] Set $i=1$.

\item [Step 1:] If $\mathcal{H}=\emptyset$, stop the process and give $\mathcal{H}$ as output.\\
If $\mathcal{H}\neq \emptyset$ set $j=k=1$, and do the following: if $i\leq \mu$,  then go to Step 2, if $i=\mu+1$, then stop the process and give $\mathcal{H}$ as output.
\item [Step 2:] If $i\notin V(\mathcal{H})$, then set $i=i+1$ and start Step 2 again.\\
If $i\in V(\mathcal{H})$, compute $d(i)$ using Algorithm A.1 in \cite{LMa2}\\
\indent if $d(i)=0,1,2$, then set $i=i+1$ and start Step 2 again;\\
\indent if $d(i)>2$ then go to Step 3;

\item [Step 3:] If $j>\mu$ then set $i=i+1$ and go to Step 2. If $j=i$ or if $\{i,j\}\notin\mathcal{H}$ then set $j=j+1$ and start again Step 3. If $\{i,j\}\in\mathcal{H}$ then go to Step 4. 

\item [Step 4:] Use Algorithm A.1 in \cite{LMa2} to compute $d(j)$. 
 If $d(j)=2$, then go to Step 5; otherwise set $j=j+1$ and go to Step 3.

\item [Step 5:] If $k=i$, or if $k=j$, or if $\{j,k\}\notin\mathcal{H}$, then set $k=k+1$ and start again Step 5. If $\{j,k\}\in\mathcal{H}$ then set $\mathcal{H}=\mathcal{H}_{i}$ and $i=i+1$, go to Step 1. (this procedure stops because $d(j)=2$)
 
\end{itemize}
\end{algorithm}

The following result provides an algorithm to remove higher dimensional edges of a hypergraph such that the projective dimension stays the same.

\begin{algorithm}\label{removeHigheredge}
Input: A connected hypergraph $\mathcal{H}=\cup \{F_i\}_{i=1}^p$ such that all edges that have cardinality greater than 2 are union of 2 or more distinct edges of $\mathcal{H}$.  The output is: a hypergraph $\mathcal{H'}$ such that $\pd(\mathcal{H})=\pd(\mathcal{H'})$.
\begin{itemize}

\item [Step 0:] Set $i=1$.

\item [Step 1:] If $\mathcal{H}=\emptyset$, stop the process and give $\mathcal{H}$ as output.\\
If $\mathcal{H}\neq \emptyset$ then do the following: if $i\leq p$,  then go to Step 2, if $i=p+1$, then  stop the process and give $\mathcal{H}$ as output.

\item [Step 2:] If $|F_i|<3$ then set $i=i+1$ and start Step 1 again. If $|F_i|\geq 3$ then set $\mathcal{H}=\mathcal{H}\backslash \{F_i\}$ and $i=i+1$, and go to Step 1. 
\end{itemize}
\end{algorithm}

\begin{remark} 
We can combine Algorithm 5.6 in \cite{LMa1}, Algorithms A.1 and A.2 in \cite{LMa2}, and \Cref{removejoint2} and \ref{removeHigheredge} to compute the projective dimension of a hypergraph that is a bush and its higher dimensional edges are on the same joints. The example below illustrates the process.  
\end{remark}

\begin{example}
Let $\mathcal{H}$ be a hypergraph as in \Cref{Brushes}. By
\Cref{Remove2Star} and \Cref{removejoint2}, we remove the red vertices that are the joints of $\mathcal{H}$ having branches of length 2 to obtain the hypergraph as in \Cref{RemoveJointF}. By \Cref{removeMeets} and \Cref{removeHigheredge}, we can remove the higher dimension green edges and by \Cref{LaundryList2}\ref{LMa2-2.9d} and \Cref{removeHigheredge} again, we can remove the blue edges. We obtain the hypergraph as in \Cref{RemoveJointedge}. Finally, we remove edges using \Cref{LaundryList2}\ref{LMa2-2.9d}, \Cref{LaundryList2}\ref{LMa2-4.7,4.9}, and Algorithms A.2 in \cite{LMa2} to obtain the hypergraph as in \Cref{RemoveJointedgeRduce}. Then by \Cref{LaundryList2}\ref{pdFormulaString} and  Algorithm 5.1 in \cite{LMa1}, we have the project dimension of $\mathcal{H}$ equal to $27+2+2+4=35$ which is coming from 27 isolated vertices, two open strings of length 3, and a string of length 5. 
\begin{figure}[h] 
\caption{}\label{Brushes}
\begin{center}
\begin{tikzpicture}[scale=0.85]
\shade [shading=ball, ball color=black] (-2,1) circle (.15);
\shade [shading=ball, ball color=black] (-2,0) circle (.15);
\shade [shading=ball, ball color=black] (0.5,-1) circle (.15);
\shade [shading=ball, ball color=black] (0,-1) circle (.15);
\draw  [shape=circle] (-1,1) circle (.15);
\shade [shading=ball, ball color=black] (-1,0) circle (.15);
\shade [shading=ball, ball color=black] (0.5,0.5) circle (.15);
\shade [shading=ball, ball color=black] (1.5,0.5) circle (.15);
\shade [shading=ball, ball color=black] (1,0.5) circle (.15);
\draw  [shape=circle] (-1.5,0.5) circle (.15);
\draw  [shape=circle] (0,0) circle (.15);
\draw  [shape=circle] (0,1) circle (.15);
\draw  [shape=circle] (1,0) circle (.15);
\draw  [shape=circle, color=red] (2,0) circle (.15);
\shade [shading=ball, ball color=red]  (3,0) circle (.15);
\draw  [shape=circle] (4,0) circle (.15);
\draw  [shape=circle] (5,0) circle (.15);
\draw  [shape=circle] (6,0) circle (.15);
\shade [shading=ball, ball color=black]  (7,0) circle (.15);

\shade [shading=ball, ball color=black] (1.5,-1) circle (.15);
\draw  [shape=circle] (2,-1) circle (.15);
\shade [shading=ball, ball color=black] (2,-1.5) circle (.15);

\shade [shading=ball, ball color=black]  (2.5,-1) circle (.15);
\draw  [shape=circle] (3.5,-1) circle (.15);
\shade [shading=ball, ball color=black]  (4.5,-1) circle (.15);

\draw  [shape=circle] (1,1) circle (.15);
\shade [shading=ball, ball color=red] (2,1) circle (.15);
\draw  [shape=circle] (3,1) circle (.15);
\draw  [shape=circle] (4,1) circle (.15);
\shade [shading=ball, ball color=red]  (5,1) circle (.15);
\draw  [shape=circle] (6,1) circle (.15);
\shade [shading=ball, ball color=black]  (7,1) circle (.15);
\draw  [shape=circle] (1.5,1.5) circle (.15);
\shade [shading=ball, ball color=black] (0.5,1.5) circle (.15);
\draw  [shape=circle] (6,1.5) circle (.15);
\shade [shading=ball, ball color=black]  (7,1.5) circle (.15);
\draw  [shape=circle] (2.5,1.5) circle (.15);
\draw [shape=circle, color=red]  (3,1.5) circle (.15);
\draw [shape=circle]  (4,1.5) circle (.15);
\shade [shading=ball, ball color=black]   (5,1.5) circle (.15);
\draw [shape=circle]  (3,2.5) circle (.15);
\shade [shading=ball, ball color=black]   (4,2.5) circle (.15);
\shade [shading=ball, ball color=black]  (6.5,-1) circle (.15);

\path [pattern=north west lines, pattern  color=green]   (-1,1)--(0,1)--(0,0)--(-1,0)--cycle;
\path [pattern=north west lines, pattern  color=green]   (2,1)--(2.5,1.5)--(1.5,1.5)--cycle;
\path [pattern=north west lines, pattern  color=blue]   (4,2.5)--(3,2.5)--(4,1.5)--cycle;
\path [pattern=north west lines, pattern  color=blue]   (6,1.5)--(6,0)--(7,1.5)--cycle;
\path [pattern=north west lines, pattern  color=blue]   (3,1)--(4,1)--(4,0)--(3.5,-1)--cycle;

\draw [line width=1.2pt ] (6,1.5)--(7,1.5)  ;
\draw [line width=1.2pt ] (6,1.5)--(5,1)  ;
\draw [line width=1.2pt ] (3,1.5)--(3,2.5)  ;
\draw [line width=1.2pt ] (3,1.5)--(5,1.5)  ;
\draw [line width=1.2pt ] (3,1)--(3,0)  ;
\draw [line width=1.2pt ] (4,2.5)--(3,2.5)  ;

\draw [line width=1.2pt ] (-1.5,0.5)--(-2,0)  ;
\draw [line width=1.2pt ] (-1.5,0.5)--(-2,1)  ;
\draw [line width=1.2pt ] (0,0)--(0.5,-1)  ;
\draw [line width=1.2pt ] (0,0)--(0,-1)  ;
\draw [line width=1.2pt ] (-1.5,0.5)--(-1,0)  ;
\draw [line width=1.2pt ] (-1.5,0.5)--(-1,1)  ;
\draw [line width=1.2pt ] (0,0)--(-1,0)  ;
\draw [line width=1.2pt ] (0,1)--(-1,1)  ;
\draw [line width=1.2pt ] (0,1)--(1,1)  ;
\draw [line width=1.2pt ] (0,0)--(1,0)  ;
\draw [line width=1.2pt ] (1,0)--(2,0)  ;
\draw [line width=1.2pt ] (2,0)--(3,0)  ;
\draw [line width=1.2pt ] (4,0)--(3,0)  ;
\draw [line width=1.2pt ] (4,0)--(5,0)  ;
\draw [line width=1.2pt ] (6,0)--(7,0)  ;
\draw [line width=1.2pt ] (6,0)--(5,1)  ;

\draw [line width=1.2pt ] (7,1)--(6,1)  ;
\draw [line width=1.2pt ] (5,1)--(6,1)  ;
\draw [line width=1.2pt ] (1,1)--(.5,0.5)  ;
\draw [line width=1.2pt ] (1,1)--(1.5,0.5)  ;
\draw [line width=1.2pt ] (1,1)--(1,0.5)  ;
\draw [line width=1.2pt ] (5,1)--(5,0)  ;
\draw [line width=1.2pt ] (5,1)--(4,1)  ;

\draw [line width=1.2pt ] (3,1)--(4,1)  ;
\draw [line width=1.2pt ] (2,1)--(3,1)  ;
\draw [line width=1.2pt ] (1,1)--(2,1)  ;

\draw [line width=1.2pt ] (2,1)--(1.5,1.5)  ;
\draw [line width=1.2pt ] (0.5,1.5)--(1.5,1.5)  ;

\draw [line width=1.2pt ] (2,1)--(2.5,1.5)  ;
\draw [line width=1.2pt ] (3,1.5)--(2.5,1.5)  ;
\draw [line width=1.2pt ] (2,-1)--(2,0)  ;
\draw [line width=1.2pt ] (2.5,-1)--(2,0)  ;
\draw [line width=1.2pt ] (1.5,-1)--(2,0)  ;
\draw [line width=1.2pt ] (2,-1)--(2,-1.5)  ;

\draw [line width=1.2pt] (3.5,-1)--(3,0)  ;
\draw [line width=1.2pt ] (3.5,-1)--(4.5,-1)  ;

\draw [line width=1.2pt] (6.5,-1)--(5,0)  ;
\end{tikzpicture}
\end{center}
\end{figure}
\begin{figure}[h] 
\caption{}\label{RemoveJointF}
\begin{center}
\begin{tikzpicture}[scale=0.85]
\shade [shading=ball, ball color=black] (-2,1) circle (.15);
\shade [shading=ball, ball color=black] (-2,0) circle (.15);
\shade [shading=ball, ball color=black] (0.5,-1) circle (.15);
\shade [shading=ball, ball color=black] (0,-1) circle (.15);
\draw  [shape=circle] (-1,1) circle (.15);
\shade [shading=ball, ball color=black] (-1,0) circle (.15);
\shade [shading=ball, ball color=black] (0.5,0.5) circle (.15);
\shade [shading=ball, ball color=black] (1.5,0.5) circle (.15);
\shade [shading=ball, ball color=black] (1,0.5) circle (.15);
\draw  [shape=circle] (-1.5,0.5) circle (.15);
\draw  [shape=circle] (0,0) circle (.15);
\draw  [shape=circle] (0,1) circle (.15);
\shade [shading=ball, ball color=black] (1,0) circle (.15);

\shade [shading=ball, ball color=black]  (4,0) circle (.15);
\draw  [shape=circle] (5,0) circle (.15);
\shade [shading=ball, ball color=black]  (6,0) circle (.15);
\shade [shading=ball, ball color=black]  (7,0) circle (.15);

\shade [shading=ball, ball color=black] (1.5,-1) circle (.15);
\shade [shading=ball, ball color=black]  (2,-1) circle (.15);
\shade [shading=ball, ball color=black] (2,-1.5) circle (.15);

\shade [shading=ball, ball color=black]  (2.5,-1) circle (.15);
\shade [shading=ball, ball color=black] (3.5,-1) circle (.15);
\shade [shading=ball, ball color=black]  (4.5,-1) circle (.15);

\draw  [shape=circle] (1,1) circle (.15);

\shade [shading=ball, ball color=black] (3,1) circle (.15);
\shade [shading=ball, ball color=black] (4,1) circle (.15);

\shade [shading=ball, ball color=black]  (6,1) circle (.15);
\shade [shading=ball, ball color=black]  (7,1) circle (.15);
\shade [shading=ball, ball color=black] (1.5,1.5) circle (.15);
\shade [shading=ball, ball color=black] (0.5,1.5) circle (.15);
\shade [shading=ball, ball color=black]  (6,1.5) circle (.15);
\shade [shading=ball, ball color=black]  (7,1.5) circle (.15);
\shade [shading=ball, ball color=black]  (2.5,1.5) circle (.15);

\shade [shading=ball, ball color=black] (4,1.5) circle (.15);
\shade [shading=ball, ball color=black]   (5,1.5) circle (.15);
\shade [shading=ball, ball color=black] (3,2.5) circle (.15);
\shade [shading=ball, ball color=black]   (4,2.5) circle (.15);
\shade [shading=ball, ball color=black]  (6.5,-1) circle (.15);

\path [pattern=north west lines, pattern  color=green]   (-1,1)--(0,1)--(0,0)--(-1,0)--cycle;
\draw [line width=1.2pt ] (2.5,1.5)--(1.5,1.5);
\path [pattern=north west lines, pattern  color=blue]   (4,2.5)--(3,2.5)--(4,1.5)--cycle;
\path [pattern=north west lines, pattern  color=blue]   (6,1.5)--(6,0)--(7,1.5)--cycle;
\path [pattern=north west lines, pattern  color=blue]   (3,1)--(4,1)--(4,0)--(3.5,-1)--cycle;

\draw [line width=1.2pt ] (6,1.5)--(7,1.5)  ;
\draw [line width=1.2pt ] (5,1.5)--(4,1.5)  ;
\draw [line width=1.2pt ] (4,2.5)--(3,2.5)  ;

\draw [line width=1.2pt ] (-1.5,0.5)--(-2,0)  ;
\draw [line width=1.2pt ] (-1.5,0.5)--(-2,1)  ;
\draw [line width=1.2pt ] (0,0)--(0.5,-1)  ;
\draw [line width=1.2pt ] (0,0)--(0,-1)  ;
\draw [line width=1.2pt ] (-1.5,0.5)--(-1,0)  ;
\draw [line width=1.2pt ] (-1.5,0.5)--(-1,1)  ;
\draw [line width=1.2pt ] (0,0)--(-1,0)  ;
\draw [line width=1.2pt ] (0,1)--(-1,1)  ;
\draw [line width=1.2pt ] (0,1)--(1,1)  ;
\draw [line width=1.2pt ] (0,0)--(1,0)  ;

\draw [line width=1.2pt ] (4,0)--(5,0)  ;
\draw [line width=1.2pt ] (6,0)--(7,0)  ;

\draw [line width=1.2pt ] (7,1)--(6,1)  ;

\draw [line width=1.2pt ] (1,1)--(.5,0.5)  ;
\draw [line width=1.2pt ] (1,1)--(1.5,0.5)  ;
\draw [line width=1.2pt ] (1,1)--(1,0.5)  ;

\draw [line width=1.2pt ] (3,1)--(4,1)  ;

\draw [line width=1.2pt ] (0.5,1.5)--(1.5,1.5)  ;

\draw [line width=1.2pt ] (2,-1)--(2,-1.5)  ;

\draw [line width=1.2pt ] (3.5,-1)--(4.5,-1)  ;

\draw [line width=1.2pt] (6.5,-1)--(5,0)  ;

\end{tikzpicture}
\end{center}
\end{figure}
\begin{figure}[h] 
\caption{}\label{RemoveJointedge}
\begin{center}
\begin{tikzpicture}[scale=0.85]
\shade [shading=ball, ball color=black] (-2,1) circle (.15);
\shade [shading=ball, ball color=black] (-2,0) circle (.15);
\shade [shading=ball, ball color=black] (0.5,-1) circle (.15);
\shade [shading=ball, ball color=black] (0,-1) circle (.15);
\draw  [shape=circle] (-1,1) circle (.15);
\shade [shading=ball, ball color=black] (-1,0) circle (.15);
\shade [shading=ball, ball color=black] (0.5,0.5) circle (.15);
\shade [shading=ball, ball color=black] (1.5,0.5) circle (.15);
\shade [shading=ball, ball color=black] (1,0.5) circle (.15);
\draw  [shape=circle] (-1.5,0.5) circle (.15);
\draw  [shape=circle] (0,0) circle (.15);
\draw  [shape=circle] (0,1) circle (.15);
\shade [shading=ball, ball color=black] (1,0) circle (.15);

\shade [shading=ball, ball color=black]  (4,0) circle (.15);
\draw  [shape=circle] (5,0) circle (.15);
\shade [shading=ball, ball color=black]  (6,0) circle (.15);
\shade [shading=ball, ball color=black]  (7,0) circle (.15);

\shade [shading=ball, ball color=black] (1.5,-1) circle (.15);
\shade [shading=ball, ball color=black]  (2,-1) circle (.15);
\shade [shading=ball, ball color=black] (2,-1.5) circle (.15);

\shade [shading=ball, ball color=black]  (2.5,-1) circle (.15);
\shade [shading=ball, ball color=black] (3.5,-1) circle (.15);
\shade [shading=ball, ball color=black]  (4.5,-1) circle (.15);

\draw  [shape=circle] (1,1) circle (.15);

\shade [shading=ball, ball color=black] (3,1) circle (.15);
\shade [shading=ball, ball color=black] (4,1) circle (.15);

\shade [shading=ball, ball color=black]  (6,1) circle (.15);
\shade [shading=ball, ball color=black]  (7,1) circle (.15);
\shade [shading=ball, ball color=black] (1.5,1.5) circle (.15);
\shade [shading=ball, ball color=black] (0.5,1.5) circle (.15);
\shade [shading=ball, ball color=black]  (6,1.5) circle (.15);
\shade [shading=ball, ball color=black]  (7,1.5) circle (.15);
\shade [shading=ball, ball color=black]  (2.5,1.5) circle (.15);

\shade [shading=ball, ball color=black] (4,1.5) circle (.15);
\shade [shading=ball, ball color=black]   (5,1.5) circle (.15);
\shade [shading=ball, ball color=black] (3,2.5) circle (.15);
\shade [shading=ball, ball color=black]   (4,2.5) circle (.15);
\shade [shading=ball, ball color=black]  (6.5,-1) circle (.15);
\draw [line width=1.2pt ] (2.5,1.5)--(1.5,1.5);

\draw [line width=1.2pt ] (6,1.5)--(7,1.5)  ;
\draw [line width=1.2pt ] (5,1.5)--(4,1.5)  ;
\draw [line width=1.2pt ] (4,2.5)--(3,2.5)  ;
\draw [line width=1.2pt ] (-1.5,0.5)--(-2,0)  ;
\draw [line width=1.2pt ] (-1.5,0.5)--(-2,1)  ;
\draw [line width=1.2pt ] (0,0)--(0.5,-1)  ;
\draw [line width=1.2pt ] (0,0)--(0,-1)  ;
\draw [line width=1.2pt ] (-1.5,0.5)--(-1,0)  ;
\draw [line width=1.2pt ] (-1.5,0.5)--(-1,1)  ;
\draw [line width=1.2pt ] (0,0)--(-1,0)  ;
\draw [line width=1.2pt ] (0,1)--(-1,1)  ;
\draw [line width=1.2pt ] (0,1)--(1,1)  ;
\draw [line width=1.2pt ] (0,0)--(1,0)  ;

\draw [line width=1.2pt ] (4,0)--(5,0)  ;
\draw [line width=1.2pt ] (6,0)--(7,0)  ;
\draw [line width=1.2pt ] (7,1)--(6,1)  ;

\draw [line width=1.2pt ] (1,1)--(.5,0.5)  ;
\draw [line width=1.2pt ] (1,1)--(1.5,0.5)  ;
\draw [line width=1.2pt ] (1,1)--(1,0.5)  ;
\draw [line width=1.2pt ] (3,1)--(4,1)  ;
\draw [line width=1.2pt ] (0.5,1.5)--(1.5,1.5)  ;
\draw [line width=1.2pt ] (2,-1)--(2,-1.5)  ;

\draw [line width=1.2pt ] (3.5,-1)--(4.5,-1)  ;

\draw [line width=1.2pt] (6.5,-1)--(5,0)  ;

\end{tikzpicture}
\end{center}
\end{figure}

\begin{figure}[h] 
\caption{}\label{RemoveJointedgeRduce}
\begin{center}
\begin{tikzpicture}[scale=0.85]
\shade [shading=ball, ball color=black] (-2,1) circle (.15);
\shade [shading=ball, ball color=black] (-2,0) circle (.15);
\shade [shading=ball, ball color=black] (0.5,-1) circle (.15);
\shade [shading=ball, ball color=black] (0,-1) circle (.15);
\shade [shading=ball, ball color=black] (-1,1) circle (.15);
\shade [shading=ball, ball color=black] (-1,0) circle (.15);
\shade [shading=ball, ball color=black] (0.5,0.5) circle (.15);
\shade [shading=ball, ball color=black] (1.5,0.5) circle (.15);
\shade [shading=ball, ball color=black] (1,0.5) circle (.15);
\draw  [shape=circle] (-1.5,0.5) circle (.15);
\draw  [shape=circle] (0,0) circle (.15);
\shade [shading=ball, ball color=black]  (0,1) circle (.15);
\shade [shading=ball, ball color=black] (1,0) circle (.15);

\shade [shading=ball, ball color=black]  (4,0) circle (.15);
\draw  [shape=circle] (5,0) circle (.15);
\shade [shading=ball, ball color=black]  (6,0) circle (.15);
\shade [shading=ball, ball color=black]  (7,0) circle (.15);

\shade [shading=ball, ball color=black] (1.5,-1) circle (.15);
\shade [shading=ball, ball color=black]  (2,-1) circle (.15);
\shade [shading=ball, ball color=black] (2,-1.5) circle (.15);

\shade [shading=ball, ball color=black]  (2.5,-1) circle (.15);
\shade [shading=ball, ball color=black] (3.5,-1) circle (.15);
\shade [shading=ball, ball color=black]  (4.5,-1) circle (.15);

\draw  [shape=circle] (1,1) circle (.15);

\shade [shading=ball, ball color=black] (3,1) circle (.15);
\shade [shading=ball, ball color=black] (4,1) circle (.15);

\shade [shading=ball, ball color=black]  (6,1) circle (.15);
\shade [shading=ball, ball color=black]  (7,1) circle (.15);
\shade [shading=ball, ball color=black] (1.5,1.5) circle (.15);
\shade [shading=ball, ball color=black] (0.5,1.5) circle (.15);
\shade [shading=ball, ball color=black]  (6,1.5) circle (.15);
\shade [shading=ball, ball color=black]  (7,1.5) circle (.15);
\shade [shading=ball, ball color=black]  (2.5,1.5) circle (.15);

\shade [shading=ball, ball color=black] (4,1.5) circle (.15);
\shade [shading=ball, ball color=black]   (5,1.5) circle (.15);
\shade [shading=ball, ball color=black] (3,2.5) circle (.15);
\shade [shading=ball, ball color=black]   (4,2.5) circle (.15);
\shade [shading=ball, ball color=black]  (6.5,-1) circle (.15);
\draw [line width=1.2pt ] (-1.5,0.5)--(-1,0)  ;
\draw [line width=1.2pt ] (-1.5,0.5)--(-1,1)  ;
\draw [line width=1.2pt ] (0,0)--(-1,0)  ;
\draw [line width=1.2pt ] (0,1)--(1,1)  ;
\draw [line width=1.2pt ] (0,0)--(1,0)  ;
\draw [line width=1.2pt ] (4,0)--(5,0)  ;
\draw [line width=1.2pt] (6.5,-1)--(5,0)  ;
\draw [line width=1.2pt ] (1,1)--(1.5,0.5)  ;
\draw [line width=1.2pt ] (-1.5,0.5)--(-1,0)  ;
\draw [line width=1.2pt ] (0,0)--(-1,0)  ;
\draw [line width=1.2pt ] (0,1)--(1,1)  ;
\draw [line width=1.2pt ] (0,0)--(1,0)  ;
\path (-2,-2)--(5,-2) node [pos=.1, right ] {The projective dimension is $27+2+2+4=35$.};

\end{tikzpicture}
\end{center}
\end{figure}
\end{example}

\newpage

\end{document}